\documentclass[11pt]{article}
%%% PACKAGE DECLARATIONS %%%
\usepackage{dsfont}
\usepackage{mathtools}
\usepackage{amsfonts}
\usepackage{amsthm}
\usepackage{amssymb}

\usepackage{mathrsfs}
\usepackage{tflpi-macros-ieee}

%%% COLOR FOR EDITING %%%%
\usepackage[usenames]{color}

%%%%%%%%%%%%%%%%%%%%%%

\title{Transverse feedback linearization with partial information for
  single-input systems\thanks{This work was supported by supported by
    the National Science and Engineering Research Council (NSERC) of
    Canada.}}

\author{Christopher Nielsen\thanks{Department
of Electrical and Computer Engineering, University of Waterloo, Waterloo,
ON, N2L 3G1 Canada. E-mail: {\tt cnielsen@uwaterloo.ca}.}}

\begin{document}

\maketitle

\begin{abstract}
  This paper is motivated by the problem of asymptotically stabilizing
  invariant sets in the state space of control systems by means of
  output feedback. The sets considered are smooth embedded in
  submanifolds and the class of system is nonlinear,
  finite-dimensional, autonomous, deterministic, single-input and
  control-affine. Given an invariant set and a control system with
  fixed output, necessary and sufficient conditions are presented for
  feedback equivalence to a normal form that facilities the design of
  output feedback controllers that stabilize the set using existing
  design techniques.
\end{abstract}

%%%%%%%%%%%%%%%%%%%%%%%%%%
\section{Introduction}
\label{sec:intro}
In this paper the problem of asymptotically stabilizing sets using
output feedback is investigated. Many control objectives can be
accomplished by stabilizing an appropriate invariant set, often a
submanifold, in the state space of a control system. This point of
view is relevant in applications such as output
regulation~\cite{Dav76},~\cite{Fra77},~\cite{IsiByr90},
synchronization~\cite{PogSanNij02}, formation control problems for
multi-agent systems~\cite{ElhMag13},~\cite{KriFraBro09} and path
following~\cite{FreRobShiJoh08},~\cite{NieFulMag10}.  Topological
obstructions for submanifold stabilization using full-state feedback
were characterized in~\cite{Man10}. Output feedback controllers are
necessary whenever the state of the system is not available for
feedback. This is common in applications where, due to economic or
technological reasons, sensors cannot measure a system's entire state.

The most natural approach to stabilizing sets using output feedback is
to find an observable, i.e., available for feedback, function that
yields a well-defined relative degree whose associated zero dynamics
manifold coincides with the set to be stabilized. If such an
observable function exists, then the set stabilization problem becomes
an output stabilization problem which can be solved using classical
and well-understood output feedback control design
techniques~\cite{AndPra09},~\cite{AtaKha99},~\cite{MarPraIsi07},~\cite{TeePra95}. The
main contribution of this paper are necessary and sufficient
conditions for the existence of such an observable function
(Theorem~\ref{thm:main}).

% their to exist a local
% transverse output that is observable. By combining this work with the
% work of~\cite{TeePra95},~\cite{AtaKha99} or~\cite{MarPraIsi07}, these
% results can be used to solve set stabilization problems using output
% feedback.

% This paper presents necessary and sufficient conditions for the
% existence of such an observable function. We consider the
% stabilization of smooth embedded submanifolds in the state space of
% smooth, nonlinear, autonomous, deterministic, finite-dimensional,
% single-input control-affine system with fixed output.

In the case of full-state feedback, i.e., the full information case,
the above approach to set stabilization was studied
in~\cite{NieMag08}. There we sought a coordinate and feedback
transformation locally bringing the control system to a ``normal
form,'' in which the system's dynamics are decomposed into two
cascade-connected subsystems. In our normal form, the driving system
is linear, time-invariant, and controllable.  It models the dynamics
``transversal'' to the target set in the sense that, in transformed
coordinates, the target set corresponds to the origin of this linear
system. We refer to the driving subsystem as the transversal
subsystem. On the other hand, the restriction of the driven system to
the target set represents the ``tangential'' motion of the control
system on the set, and for this reason such restriction is referred to
as the tangential subsystem. The process of bringing the original
control system to the normal form just described is called local
transverse feedback linearization (LTFL). This terminology originated
with the work of Andrzej Banaszuk and John Hauser
in~\cite{BanHau95}.%  The geometric necessary and sufficient conditions
% for one to able to solve classical feedback equivalence problems, like
% full-state feedback linearization and partial feedback linearization,
% can be stated in terms of the existence of a ``virtual output''
% yielding a well-defined relative degree. The same is true for local
% transverse feedback linearization. In the context of local \TFL we
% refer to this virtual output as a local transverse output of a control
% system with respect to an invariant set, in short, a local transverse
% output.

\subsection{Contributions}
The contributions of the paper are the following. 1) The results
in~\cite{NieMag06},~\cite{NieMag08} are extended to the partial
information case in Theorem~\ref{thm:main}. In that work we assumed
that the full state of the control system is available for feedback
and that, in particular, the local transverse output was permitted to
be a function of the entire state. In this paper we assume that the
only information available for feedback is modeled by a fixed
output. We refer to this as the partial information case. This work is
complementary to the
papers~\cite{AtaKha99},~\cite{MarPraIsi07},~\cite{MarPraIsi10}~\cite{DelliPriscoli2009},~\cite{TeePra95}. In
those papers output feedback controllers are designed for systems in a
given normal form. The main result of this work, motivated by set
stabilization problems, provides necessary and sufficient conditions
under which a system can be brought into the aforementioned normal
form in which the control design techniques can be applied. 2)
Sufficient conditions under which a global version of the problem can
be solved are presented in Section~\ref{sec:global}. 3) In
Section~\ref{sec:disturbances} we show how the results of this paper
can be used to facilitate control design for systems affected by
unmeasured disturbances. A preliminary version of this paper appeared
in~\cite{NieMag12}.

\section{Motivating example}
\label{sec:motivation}
Consider a system
\begin{equation}
\left[
  \begin{array}{c}
\dot x_1 \\  
\dot x_2 \\  
\dot x_3 \\  
\dot x_4 \\
\dot x_5
  \end{array}
\right] = \left[
  \begin{array}{c}
x_4    \\
-x_3-x_2^3\\
x_2\\
0\\
x_1
  \end{array}
\right] + \left[
  \begin{array}{c}
x_1\\0\\0\\1\\x_5    
  \end{array}
\right]u
\label{eq:example1}
\end{equation}
with output
\begin{equation}
y = h(x) = \left[
  \begin{array}{c}
x_4 \\  
x_5
  \end{array}
\right].
\label{eq:ex_out}
\end{equation}
We are interested in locally stabilizing the invariant set
\[
\Gamma^\star = \set{x \in \Real^5 : x_1 = x_4 = x_5 = 0}.
\]
As discussed in Section~\ref{sec:intro}, the most direct approach to
locally stabilizing this set using output feedback is to seek an
observable function that yields, in a \nbhd of a point $x_0 \in
\Gamma^\star$, a well-defined relative degree whose associated zero
dynamics manifold coincides with $\Gamma^\star$. If such a function
exists, then zeroing the function locally solves the set stabilization
problem (if the trajectories of the closed-loop system are
bounded). Furthermore, if the function is observable,
system~\eqref{eq:example1} is feedback equivalent to a system that
fits the framework of well-known and ``standard'' output feedback
control design approaches~\cite{AtaKha99},~\cite{TeePra95} that can
zero the observable function.

Specifically, in this example we seek a function $\lambda: \Real^5 \to
\Real$ that, in a \nbhd of a point $x_0 \in \Gamma^\star$, has the
following properties.
\begin{enumerate}
\item The function $\lambda$ yields a well-defined relative degree at
  $x_0 \in \Gamma^\star$.
\item The zero dynamics manifold of System~\eqref{eq:example1} with
  output $\lambda$ coincides with the target set $\Gamma^\star$ in a
  \nbhd of $x_0$.
\item The function $\lambda$ is observable. In other words, the
  function $\lambda$ can be expressed as a composition $\lambda =
  \tilde{\lambda} \circ h$ of a sufficiently smooth function
  $\tilde{\lambda} : \Real^2 \to \Real$ with the
  output~\eqref{eq:ex_out}.
\end{enumerate}

A natural first attempt to finding a function with the aforementioned
properties is to check if any of the constraints that define
$\Gamma^\star$ satisfy the conditions enumerated above. In this
example neither of the functions $x_1$ and $x_5$ yield a well-defined
relative degree at any point on $\Gamma^\star$. Furthermore, $x_1$ is
not observable. The constraint function $x_4$ does yield a
well-defined relative degree and is observable, however the zero
dynamics manifold associated to the output $x_4$ does not equal
$\Gamma^\star$. Hence making $x_4 \To 0 $ does not ensure that the set
$\Gamma^\star$ is locally attractive. These facts mean that it is not
clear whether or not a function $\lambda$ that satisfies the three
conditions above exists and, therefore, it is not clear whether or not
the above program can be carried out.

The main contribution of this paper is to provide, given a control
system with fixed output and an invariant set, necessary and
sufficient conditions for the existence of a function $\lambda$ that
satisfies the three conditions listed above. In this example the
function
\begin{equation}
\lambda(x) = \tilde{\lambda} \circ h(x) = x_5\e^{-x_4}
\label{eq:lambda_ex}
\end{equation}
meets the above criteria with $\tilde{\lambda}(y) = y_2\e^{-y_1}$. We
now illustrate how this function facilities output feedback
stabilization of $\Gamma^\star$.

Using the observable function~\eqref{eq:lambda_ex} define the
coordinate transformation
\[
\left[
  \begin{array}{c}
\eta_1 \\  
\eta_2 \\  
\xi_1 \\  
\xi_2 \\  
\xi_3
  \end{array}
\right] \coloneqq \left[
  \begin{array}{c}
x_2    \\
x_3\\
x_5\e^{-x_4}\\
x_1\e^{-x_4}\\
x_4\e^{-x_4}
  \end{array}
\right]
\]
which, by the inverse function theorem, is a diffeomorphism of a \nbhd
of any point $x \in \Real^5$.  The system in $(\eta, \xi)$-coordinates
reads
\[
\left[
  \begin{array}{c}
\dot \eta_1 \\  
\dot \eta_2 \\  
\dot \xi_1 \\  
\dot \xi_2 \\  
\dot \xi_3
  \end{array}
\right] = \left[
  \begin{array}{c}
-\eta_2-\eta_1^3    \\
\eta_1\\
\xi_2\\
\xi_3\\
0
  \end{array}
\right] + \left[
  \begin{array}{c}
0\\0\\0\\0\\1    
  \end{array}
\right]\phi(\eta, \xi)u
\]
where $\phi(\eta, \xi)=\left.\left(1 - x_4\right)\e^{-x_4}\right|_{x =
  T^{-1}(\eta, \xi)}$ and where $\xi_1 = \lambda(x)$ is available for
feedback. For this system there are various approaches one can take to
stabilize the $\xi$-subsystem. For example, one can view the
stabilization problem as the study of a system with unknown high
frequency gain for which the techniques in~\cite{Nuss83} along with
the switching strategy in~\cite{ILCOWE91} can be employed to stabilize
$\Gamma^\star$. Alternatively, using the results in~\cite{AtaKha99},
there exists a dynamic feedback that stabilizes $\Gamma^\star$ using
only measurements of $\xi_1$. The ``high-gain'' observer used
in~\cite{AtaKha99} takes the form
\[
\dot{\hat{\xi}} = \left[\begin{array}{ccc}0 & 1 & 0\\ 0 & 0 & 1\\0 &
    0 & 0\end{array}\right]\hat{\xi} + \left[
  \begin{array}{c}
    0\\0\\1    
  \end{array}
\right]\phi_0(\hat{\xi})u + \left[
  \begin{array}{c}
    \frac{\alpha_1}{\varepsilon}\\\frac{\alpha_2}{\varepsilon^2}\\\frac{\alpha_3}{\varepsilon^3}
  \end{array}
\right]\left(\xi_1
      - \hat{\xi}_1\right)
\]
where $\hat{\xi} \in \Real^3$ is an estimate of $\xi$,
$\phi_0(\hat{\xi}) = (1 - \hat{\xi}_3)$ is the nominal, inexact, model
of $\phi(\eta, \xi)$, $\varepsilon > 0$ is a high-gain parameter and
the constants $\alpha_i$ are chosen so that the polynomial $s^3 +
\alpha_1s^2 + \alpha_2s + \alpha_3$ is Hurwitz. The control law is
chosen as
\[
u = \frac{1}{\phi_0(\hat{\xi})}\left(-k_1\hat{\xi}_1 -k_2\hat{\xi}_2 -k_3\hat{\xi}_3 \right)
\]
with $k_i > 0$, $i \in \set{1,2,3}$.
%  It is well-known that high-gain
% observers have ``peaking'' in their estimates that can cause
% $\hat{\xi}$ to have large transient peaks with order of magnitude
% $O(1/\varepsilon)$ for short periods of time. As a result, it is
% common to saturate the control signal to ensure that peaking in the
% estimates does not lead to instability. 

The applicability of the above approach to output feedback control
design depends crucially on the existence of the observable
function~\eqref{eq:lambda_ex}. Therefore, a key challenge in output
feedback stabilization of invariant sets is finding such a function
and, most importantly, in determining whether it exists or not. This
paper completely solves the latter question for single-input systems.

\section{Preliminaries}
This section presents the notation used throughout the
paper. Section~\ref{sec:bundles} contains supporting material needed
to prove the main result. Section~\ref{sec:vectorfields} provides
definitions for the concept of invariance used in this paper and the
Lie derivative and Lie bracket.

\subsection{Notation}
Let $\col{(x_1 \ldots, x_k)} \coloneqq \left[\begin{array}{ccc}x_1 &
    \cdots & x_n\end{array}\right]^\top$ where ${}^\top$ denotes
transpose.  Let $x$ and $y$ be two column vectors, define $\col{(x,y)}
\coloneqq \left[\begin{array}{cc}x^\top &
    y^\top\end{array}\right]^\top$. If $x \in \Real^n$ then $\|x\|$
denotes the Euclidean norm. If $\mathscr{V}$ and $\mathscr{W}$ are
subspaces of the finite-dimensional vector space $\mathscr{X}$, the
notation $\mathscr{V} \oplus \mathscr{W}$ (internal direct sum)
represents the subspace $\mathscr{V} + \mathscr{W}$ when $\mathscr{V}$
and $\mathscr{W}$ are independent.

If $f$ is a scalar-valued function from an open set $U \subseteq
\Real^n$ into $\Real$, and $k$ times continuously differentiable for
at every $x \in U$, then $f$ is of differentiability class $C^k$ on
$U$, denoted $f \in C^k(U)$ or $f \in C^k$ when the domain of $f$ is
clear. If $f$ is $C^k$ for all $k$, then $f$ is $C^\infty$ or
smooth. If $f : U \subseteq \Real^n \to V \subseteq \Real^m$ is a
continuously differentiable map, then for each $x \in U$, the
derivative of $f$ at $x$, denoted $\D f_x$ , is a linear map $\D f_x :
\Real^n \to \Real^m$. Its matrix representation is the Jacobian matrix
of $f$ evaluated at $x$. If $U$ is an open set of $\Real^n$, let
$\Diff{U}$ denote the family of diffeomorphism with domain $U$.

For brevity, the term submanifold is used in place of embedded
submanifold of $\Real^n$. If $M$ is a smooth manifold and $ p \in M$,
we denote by $T_pM$ the tangent space to $M$ at $p$ and by $TM$ the
tangent bundle of $M$. The cotangent space to $M$ at $p$ is denoted by
$T^\star_pM$ and the cotangent bundle is written as $T^\star M$. 
\begin{definition}
\label{def:distribution}
A smooth distribution $D$ on a manifold $M$ is an assignment to each
$p \in M$ of a subspace $D(p) \subseteq T_pM$ which varies smoothly as
a function of $p$. A point $p \in M$ is a regular point of the smooth
distribution $D$ if there exists a \nbhd $U$ containing $p$ for which
$\dim{(D(q))}$ is constant for all $q \in U$. In this case, $D$ is
said to be {nonsingular} on $U$. Similarly, a codistribution $\Omega$
on $M$ assigns at each $p \in M$ a subspace $\Omega(p) \subseteq
T^\star_pM$.
\end{definition}

Given a smooth distribution $D$, we let $\inv{(D)}$ be its involutive
closure (the smallest involutive distribution containing $D$). The
codistribution $\ann{(D)}$ is the annihilator of $D$, i.e., an
assignment to each $p \in M$ of a subspace $\ann{(D)}(p) \subseteq
T_p^\star M$ with the property that if $\sigma \in \ann{(D)}(p)$ and
$\tau \in D(p)$, then $\sigma(\tau) = 0$.

\subsection{Vector bundles}
\label{sec:bundles}
% The discussion, terminology and notation used in this section is based
% on the material in~\cite{Spi05,Hir76,Lee02,AbrMarRat88}. 
If $D$ is a distribution defined on $\Real^n$ and $N$ is a submanifold
we at times consider objects like $TN + D$ and $TN \cap D$. These
objects are examples of real vector bundles, more precisely,
subbundles of $\left.T\Real^n\right|_N$. They are defined, for each $p
\in N$, by $T_pN+ D(p)$ and $T_p N \cap D(p)$, respectively.  These
subbundles, and the operations on them, can be defined formally using
the framework of vector bundles~\cite{Hir76, Lee02, Spi05}.

\begin{definition}
A $n$-dimensional (real) vector bundle is a map
\[
\pi : E \rightarrow B
\]
of manifolds $E$ and $B$ such that, for any $b \in B$, the inverse
image $\pi^{-1}(b)$ has the structure of the $n$-dimensional vector
space $\Real^n$ having the following property of local triviality: For
each $b \in B$, there exists a \nbhd $U$ of $b$ in $B$ and a
diffeomorphism
\[
h: \pi^{-1}(U) \rightarrow U \times \Real^n
\]
such that for every $b^\prime \in U$ the assignment of $x \in
\pi^{-1}(b^\prime)$ to $h(x) = (b^\prime, \hat{h}(x))$ is an
isomorphism of $\pi^{-1}(b^\prime)$ to $\{b^\prime\} \times
\Real^n$. The manifold $E$ is called the total space, $B$ is
called the base space and the vector space $E_b \coloneqq \pi^{-1}(b)$
is called the fibre over $b$.
\label{def:vecbundle}
\end{definition}

All of the vector bundles encountered in this paper are
finite-dimensional and real. We typically denote a vector bundle
$(\pi, E, B)$ by $E$ alone. Given a bundle $(\pi, E, B)$, for each
$b\in B$ we can replace the fibre $\pi^{-1}(b)$ with different vector
spaces. In this paper we will only consider the simplest case, we
replace each vector space $\pi^{-1}(b)$ with its dual space.

\begin{definition}
Let $\xi = (\pi, E, B)$ be a vector bundle. The dual bundle to $\xi$,
is $\xi^\star = \left(\pi^\star, E^\star, B\right)$ where
\[
E^\star \coloneqq \bigcup_{b \in B} \left(\pi^{-1}(b)\right)^\star,
\]
and $\pi^\star: E^\star \rightarrow B$ is the natural projection
$\pi^\star : \left(\pi^{-1}(p)\right)^\star \mapsto p$.
\end{definition}

When this construction is applied to the tangent bundle $TM$ of a
manifold $M$, the resulting bundle is the cotangent bundle $T^\star M$
of $M$.

\begin{definition}
Let $\eta=(\pi_F, F, B)$ be a subbundle of the smooth vector bundle
$\xi = (\pi, E, B)$. The annihilator $\ann{(\eta)}$ of $\eta$, is the
subbundle of $\xi^\star$ whose fibres are defined at each $b \in B$ by
\[
\begin{aligned}
\ann{\left(F_b\right)} &\coloneqq \{e^\star \in E^\star_b: e^\star(f) =
0, \; \forall \ f \in F_b\}.
\end{aligned}
\]
\label{def:annvb}
\end{definition}

Recall that, if $\mathscr{X}$ is a finite dimensional vector space,
then $\left(\mathscr{X}^\star\right)^\star =:
\mathscr{X}^{\star\star}$ is canonically isomorphic to
$\mathscr{X}$. Using this fact, and applying
Definition~\ref{def:annvb} twice to the vector bundle $\xi$, we obtain
the following.

\begin{proposition}
Let $\xi = (\pi, E, B)$ be a smooth vector bundle over $B$. Then
\[
\ann{(\ann{(\xi)})} = \xi.
\]
\label{prop:annann}
\end{proposition}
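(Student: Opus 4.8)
The plan is to prove $\ann{(\ann{(\xi)})} = \xi$ by working fibrewise and invoking the canonical double-dual isomorphism. The statement is really about two vector bundles over the same base $B$ that share the same fibre dimension, so the natural strategy is to exhibit a canonical bundle isomorphism between $\ann{(\ann{(\xi)})}$ and $\xi$ and argue that it is in fact the identity under the standard identification $\mathscr{X}^{\star\star} \cong \mathscr{X}$.

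First I would fix $b \in B$ and unwind the two applications of Definition~\ref{def:annvb}. Applying the annihilator once to $\xi = (\pi, E, B)$ is a slight abuse of notation (the definition is phrased for a subbundle $\eta$ of an ambient $\xi$); here one takes $\xi$ to be a subbundle of itself, so $\ann{(\xi)}$ lives in $\xi^\star$ and its fibre is $\ann{(E_b)} = \{e^\star \in E_b^\star : e^\star(e) = 0,\ \forall\, e \in E_b\}$, which is the zero subspace of $E_b^\star$. Applying the construction a second time, $\ann{(\ann{(\xi)})}$ is a subbundle of $\xi^{\star\star}$ whose fibre is $\ann{(\{0\})} = \{e^{\star\star} \in E_b^{\star\star} : e^{\star\star}(0) = 0\} = E_b^{\star\star}$, the whole double dual. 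So the fibrewise claim reduces to $E_b^{\star\star} = E_b$ under the canonical identification.

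Next I would invoke the canonical isomorphism $\mathscr{X} \to \mathscr{X}^{\star\star}$, $v \mapsto \operatorname{ev}_v$, where $\operatorname{ev}_v(e^\star) = e^\star(v)$, which for finite-dimensional $\mathscr{X}$ is an isomorphism — exactly the fact recalled just before the proposition. Since $\dim E_b < \infty$ this gives $E_b^{\star\star} \cong E_b$ canonically at every $b$, and because the isomorphism is natural it is compatible with the local trivializations of $\xi$, hence assembles into a smooth bundle isomorphism over $B$. Under this identification the fibre $E_b^{\star\star}$ of $\ann{(\ann{(\xi)})}$ is precisely $E_b$, which is the fibre of $\xi$, and the two projections agree by construction, giving the claimed equality of bundles.

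The only genuinely delicate point — more bookkeeping than obstacle — is that the statement is an equality of bundles rather than a mere fibrewise isomorphism, so I would need to check that the canonical double-dual maps glue smoothly across overlapping trivializations. This follows because the double-dual construction is functorial and the transition maps of $\xi^{\star\star}$ are the double-duals (i.e.\ inverse-transpose-transposes) of those of $\xi$, which under the canonical identification coincide with the original transition maps; thus the identification is trivialization-independent and smooth. Everything else is a routine check that the double annihilator of the zero subbundle is the full bundle, which the computation above already records.
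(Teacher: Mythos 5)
Your proof is correct and takes essentially the same route as the paper: the paper's justification is precisely the fibrewise double application of Definition~\ref{def:annvb} combined with the canonical finite-dimensional identification $\mathscr{X}^{\star\star} \cong \mathscr{X}$, which is exactly what you carry out. Your extra observations --- that under the literal reading $\ann{(\xi)}$ is the zero subbundle of $\xi^\star$, that its annihilator is then all of $\xi^{\star\star}$, and that naturality of the double-dual map makes the identification smooth and trivialization-independent --- just make explicit what the paper leaves implicit.
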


Proposition~\ref{prop:annann} implies that, if $\eta^\star$ is a
subbundle of the cotangent bundle $T^\star M$, then
$\ann{\left(\eta^\star\right)}$ is a subbundle of $TM$, the tangent
bundle to $M$. The following results, needed in this paper, can be
found in~\cite{Hir76, Lee02, Spi05}.

\begin{proposition}
Let $\xi = (\pi, E, B)$, $\xi_1 = (\pi_{E_1}, E_1, B)$ and $\xi_2 =
(\pi_{E_2}, E_2, B)$ be vector bundles such that $\xi_2 \subseteq
\xi_1 \subseteq \xi$, then,
\[
\ann{(\xi)} \subseteq \ann{(\xi_1)} \subseteq \ann{(\xi_2)} \subseteq
\xi^\star.
\]
\label{prop:ann12}
\end{proposition}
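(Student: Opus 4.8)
The plan is to prove the chain of inclusions $\ann{(\xi)} \subseteq \ann{(\xi_1)} \subseteq \ann{(\xi_2)} \subseteq \xi^\star$ by working fibrewise over each point $b \in B$ and appealing to a standard fact from linear algebra about annihilators of nested subspaces. The key observation is that all of the operations involved — taking subbundles, duals, and annihilators — are defined pointwise over the base $B$, so it suffices to verify the inclusions at the level of fibres and then note that the fibrewise inclusions assemble into inclusions of subbundles.

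First I would fix $b \in B$ and record the fibre relations. Since $\xi_2 \subseteq \xi_1 \subseteq \xi$ as vector bundles, their fibres satisfy $(E_2)_b \subseteq (E_1)_b \subseteq E_b$ as nested subspaces of $E_b$. By Definition~\ref{def:annvb}, the fibre of $\ann{(\xi)}$ over $b$ is
\[
\ann{(E_b)} = \{e^\star \in E^\star_b : e^\star(f) = 0, \; \forall\, f \in E_b\},
\]
and similarly for $\ann{(\xi_1)}$ and $\ann{(\xi_2)}$, each sitting inside $E^\star_b$. All three annihilators are therefore subspaces of the single dual space $E^\star_b$, which is the fibre of $\xi^\star$ over $b$.

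Next I would invoke the elementary linear-algebra fact that the annihilator is inclusion-reversing: if $\mathscr{V} \subseteq \mathscr{W}$ are subspaces of a finite-dimensional vector space $\mathscr{X}$, then any covector annihilating all of $\mathscr{W}$ must in particular annihilate the smaller subspace $\mathscr{V}$, so $\ann{(\mathscr{W})} \subseteq \ann{(\mathscr{V})}$. Applying this with $\mathscr{V} = (E_1)_b \subseteq \mathscr{W} = E_b$ gives $\ann{(E_b)} \subseteq \ann{((E_1)_b)}$, and applying it with $\mathscr{V} = (E_2)_b \subseteq \mathscr{W} = (E_1)_b$ gives $\ann{((E_1)_b)} \subseteq \ann{((E_2)_b)}$. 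Finally, $\ann{((E_2)_b)} \subseteq E^\star_b$ trivially, since it is by definition a subspace of the dual fibre. Chaining these yields the desired inclusions at the point $b$.

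Since $b \in B$ was arbitrary and each inclusion holds fibrewise, the corresponding inclusions hold as subbundles of $\xi^\star$, which completes the argument. The only point requiring a moment's care — and what I would flag as the main obstacle, though it is mild — is confirming that a fibrewise inclusion of subbundles is genuinely an inclusion of subbundles in the bundle sense; this is immediate here because the annihilator construction of Definition~\ref{def:annvb} produces bona fide subbundles of $\xi^\star$ by hypothesis, so no smoothness or local-triviality conditions need to be re-checked and the result follows purely from the pointwise linear algebra.
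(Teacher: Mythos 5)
Your proof is correct and complete. The paper itself gives no proof of Proposition~\ref{prop:ann12} --- it defers to the cited references on vector bundles --- and your argument (nested fibres, the inclusion-reversing property of the annihilator within each dual fibre $E^\star_b$, plus the observation that Definition~\ref{def:annvb} already guarantees each annihilator is a bona fide subbundle of $\xi^\star$, so no smoothness or local-triviality needs re-checking) is exactly the standard fibrewise argument those references supply. Nothing is missing.
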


\begin{proposition}
Let $\xi_1 = (\pi_{E_1}, E_1, B)$, $\xi_2 = (\pi_{E_2}, E_2, B)$ be
subbundles of the smooth vector bundle $\xi = (\pi, E, B)$. If $\xi_1
+ \xi_2 = \left(\pi_{F}, F, B\right)$ is also a subbundle of $\xi$,
then
\[
\ann{\left(\xi_1 + \xi_2\right)} = \ann{(\xi_1)}\cap \ann{(\xi_2)}.
\]
\label{prop:annint}
\end{proposition}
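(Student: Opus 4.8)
The plan is to reduce the claimed identity of subbundles to the corresponding statement in linear algebra carried out fibrewise, since the three operations involved, namely $+$, $\cap$, and $\ann{(\cdot)}$, are all defined pointwise on fibres (Definition~\ref{def:annvb} and the preceding discussion of $TN + D$ and $TN \cap D$). Fixing $b \in B$, I would write $V \coloneqq (E_1)_b$ and $W \coloneqq (E_2)_b$ for the fibres of $\xi_1$ and $\xi_2$ over $b$, so that by hypothesis the fibre of $\xi_1 + \xi_2$ over $b$ is $F_b = V + W$. The goal at each fibre is then the elementary identity $\ann{(V + W)} = \ann{(V)} \cap \ann{(W)}$ inside the dual space $E^\star_b$.

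First I would establish the inclusion $\ann{(V + W)} \subseteq \ann{(V)} \cap \ann{(W)}$. If a covector $\sigma \in E^\star_b$ annihilates $V + W$, then since $V \subseteq V + W$ and $W \subseteq V + W$ it annihilates both $V$ and $W$, whence $\sigma \in \ann{(V)} \cap \ann{(W)}$; this direction is in fact immediate from the monotonicity of the annihilator (Proposition~\ref{prop:ann12}) applied to $V \subseteq V + W$ and to $W \subseteq V + W$. For the reverse inclusion, take $\sigma \in \ann{(V)} \cap \ann{(W)}$ and an arbitrary element of $V + W$, which by definition of the sum has the form $v + w$ with $v \in V$ and $w \in W$. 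Linearity then gives $\sigma(v + w) = \sigma(v) + \sigma(w) = 0$, so $\sigma \in \ann{(V + W)}$. Combining the two inclusions yields the fibrewise equality at every $b \in B$.

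It remains to promote this fibrewise equality of vector spaces to an equality of subbundles of $\xi^\star$. Here the hypothesis that $\xi_1 + \xi_2 = (\pi_F, F, B)$ is a subbundle is essential: by Definition~\ref{def:annvb} its annihilator $\ann{(\xi_1 + \xi_2)}$ is a bona fide subbundle of $\xi^\star$, of constant rank $\dim E_b - \dim F_b$. The fibrewise computation above shows that at every $b \in B$ the fibre of $\ann{(\xi_1)} \cap \ann{(\xi_2)}$ coincides with that of $\ann{(\xi_1 + \xi_2)}$, so the intersection has the same constant rank and the same fibres, and the two subbundles are therefore equal.

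I expect the only genuine subtlety to lie in this last structural point rather than in the linear algebra. In general the fibrewise intersection of two subbundles need not itself be a subbundle, since its rank can jump at points where the fibres fail to meet transversally; what rescues us is precisely that its fibres agree with those of $\ann{(\xi_1 + \xi_2)}$, whose regularity is guaranteed by the standing hypothesis on $\xi_1 + \xi_2$. Thus no separate constant-rank argument for the intersection is needed, as that regularity is inherited from the sum bundle.
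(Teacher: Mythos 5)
Your proof is correct. The paper itself states Proposition~\ref{prop:annint} without proof, deferring to the standard references~\cite{Hir76, Lee02, Spi05}, and your argument --- the fibrewise linear-algebra identity $\ann{(V+W)} = \ann{(V)} \cap \ann{(W)}$, promoted to bundles by noting that the hypothesis on $\xi_1 + \xi_2$ makes $\ann{(\xi_1 + \xi_2)}$ a genuine subbundle (Definition~\ref{def:annvb}) whose structure the fibrewise-equal intersection then inherits --- is precisely the standard argument given there. You also correctly isolate the one real subtlety, namely that a fibrewise intersection of subbundles need not be a subbundle in general, and that here its regularity is inherited from the sum rather than needing a separate constant-rank argument.
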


\subsection{Invariant sets, Lie derivatives, Lie brackets}
\label{sec:vectorfields}
Denote the set of all $C^\infty$-vector fields on a smooth manifold
$M$ by ${\mathsf V}(M)$. Given $v \in {\mathsf V}(M)$ and a point $p
\in M$, we denote the maximal integral curve, or flow, generated by
the vector field $v$ through the point $p$ as $\phi^v_t(x)$.

\begin{definition}
  A set $N \subset M$ is said to be invariant under $v \in {\mathsf
    V}(M)$ if
\[
\left(p \in N\right) \Rightarrow (\forall t \geq 0)(\phi^v_t(p)
\in N).
\]
\label{def:invariant}
\end{definition}

The property of invariance in Definition~\ref{def:invariant} is
sometimes called positive or forward invariance because $N$ is
invariant for $t \geq 0$. When $N$ is a closed submanifold invariance
for $t \geq 0$ is equivalent to invariance for $t \in \Real$. If $N$
is an $n$-dimensional submanifold of $M$ expressed as
%
%
%\[
$N = \{p \in M: \phi(p) = 0\}$,
%\]
%
%
where $\phi(p) = \col(\phi_1(p), \ldots, \phi_{m-n}(p))$ is a smooth
map $M \rightarrow \Real^{m-n}$, and $0$ is a regular value of
$\phi$, then there is a particularly simple criterion for invariance.

\begin{theorem}
Let $\phi : M \rightarrow \Real^{m-n}$ be a smooth map, and $0$
be a regular value of $\phi$. Let $v \in {\mathsf V}(M)$, then, $N =
\phi^{-1}(0)$ is invariant under $v$ if, and only if,
\[
\left(\D\phi_i\right)_p(v(p)) = 0
\]
for all $i \in \{1, \ldots, m-n\}$ and all $p \in \phi^{-1}(0)$.
\label{thm:inv_sub}
\end{theorem}

Geometrically, the theorem asserts that $N$ is invariant under $v$ if
and only if $v$ is tangent to $N$, everywhere on $N$. The same is true
for general closed submanifolds of $M$.

% %
% \begin{theorem}
% Let $N$ be a closed submanifold of $M$, and $v \in {\mathsf V}(M)$,
% Then, $N$ is invariant under $v$ if, and only if,
% %
% %
% \[
% \left(\forall \ p \in N\right) \; \; v(p) \in T_pN
% \]
% %
% %
% or, equivalently, $\left.v\right|_N : N \rightarrow TN$.
% \end{theorem}
% %

% \begin{definition}
%   If $F: M \rightarrow N$ is a diffeomorphism between two manifolds,
%   and if $v \in {\mathsf V}(M)$, then the differential of $F$ defines
%   a vector field $F_\star v \in {\mathsf V}(N)$ at each $q \in N$ by
%   means of the push-forward map $F_\star : {\mathsf V(M)} \rightarrow
%   {\mathsf V(N)}$, defined as
% %
% %
% \[
% F_\star v(q) \coloneqq (\D F_p v(p))\big|_{p=F^{-1}(q)}.
% \]
% %
% %
% \end{definition}

% This corresponds to the usual change of coordinates in a differential
% equation.

% \begin{lemma}
% If $F: M \rightarrow N$ is a diffeomorphism between manifolds, and $v
% \in {\mathsf V}(M)$, then
% \[
% F(\phi^v_t(p)) = \phi^{F_\star v}_{t}(F(p)).
% \]
% In other words, if $F$ is a diffeomorphism, the flow generated by
% $F_\star v$ is the image under $F$ of the flow generated by $v$.
% \label{lem:Fstar_diffeo}
% \end{lemma}
% %
% %

\begin{definition}
If $v \in {\mathsf V}(M)$ and $\lambda\in C^\infty(M)$ then the
derivative of $\lambda$ along $v$ is a function $L_v\lambda: M
\rightarrow \Real$ defined by
\[
L_v\lambda(p) = \lim_{h\rightarrow
0}\frac{1}{h}\left[\lambda(\phi^v_h(p)) - \lambda(p)\right]
\]
and called the Lie or directional derivative of $\lambda$ along $v$ at
$p$. It is an element of $C^\infty(M)$.
\label{def:liederivative}
\end{definition}

\begin{definition}
  If $f, g \in {\mathsf V}(M)$, then the Lie bracket of $f$ and $g$ is
  a vector field $\liebr{f}{g} \in {\mathsf V}(M)$ defined by the
  relation
\[
\left(\forall \lambda \in C^\infty(M)\right) \qquad L_{\left[f,
    g\right]}\lambda = L_f (L_g \lambda) - L_g (L_f \lambda).
\]
\label{def:liebracket}
\end{definition}
Definitions~\ref{def:liederivative} and~\ref{def:liebracket} are implicit
in that they do not directly indicate how to compute, respectively,
the Lie derivative and Lie bracket. % If $(W, \psi)$ is a
% chart on $M$, and $f,g \in {\mathsf V}(M)$ are represented as $f(p) =
% \sum_if_i(p)\left(\DER{}{\psi_i}\right)_p$, $g(p) =
% \sum_ig_i(p)\left(\DER{}{\psi_i}\right)_p$, then,
% %
% %
% \[
% \liebr{f}{g}(p) = \sum^m_{i=1}\left(L_fg_i(p) -
% L_gf_i(p)\right)\left(\DER{}{\psi_i}\right)_p.
% \]
% %
% %
If $\lambda \in C^\infty(\Real^n)$ and $v \in \mathsf{V}(\Real^n)$
then $L_v\lambda(x)$ is computed as
\[
L_v\lambda(x) = \left(\D\lambda\right)_x(v(x)).
\]
If $f, g \in {\mathsf V}(\Real^n)$, the Lie bracket of $f$ and $g$ is
computed as
\[
\liebr{f}{g}(x) = \D g_x\left(f(x)\right) - \D f_x\left(g(x)\right),
\]
where $\D f_x$, $\D g_x$ are the derivative maps of the vector
functions $f, g: \Real^n \rightarrow \Real^n$. We use the following
standard notation for iterated Lie derivatives and Lie brackets
\[
\begin{aligned}
&L^0_g\lambda \coloneqq \lambda, \; \;
\; L^k_g\lambda \coloneqq L_g(L^{k-1}_g\lambda),\\
&L_gL_f\lambda \coloneqq L_g(L_f\lambda), \\ &ad^0_fg \coloneqq g, \; \; \;
ad^k_fg \coloneqq \left[f, ad^{k-1}_fg\right], \; \; \; k \geq 1.
\end{aligned}
\]
%
%
% \begin{lemma}[\cite{Spi05}]
% Let $M$ be a manifold and let $f, g \in {\mathsf V}(M)$. Then
% $\liebr{f}{g} = 0$ if and only if
% \[
% \phi^f_t \circ \phi^g_s(p) = \phi^g_s \circ \phi^f_t(p)
% \]
% for all $p\in M$ and all $t, s \in \Real$ such that the flows in the
% above equation are defined.
% \label{lem:commute}
% \end{lemma}

\section{Problem formulation}
\label{sec:problem}
Consider a control system modeled by equations of the form
\begin{equation}
\dot{x} = f(x) + g(x)u.
\label{eq:system} 
\end{equation}
Here $x \in \Real^n$ is the state and $u \in \Real$ is the control
input. The vector fields $f$ and $g : \Real^n \rightarrow T\Real^n$
are smooth ($C^\infty$). Suppose that the state $x$ is not available
for feedback but, rather, the only available information is given by a
smooth vector output
\begin{equation}
\begin{aligned}
y = h(x), \qquad h: \Real^n \rightarrow \Real^p.
\end{aligned}
\label{eq:output}
\end{equation}
We assume that the component functions $\col{(h_1(x), \ldots,
  h_p(x))}$ of the output $h(x)$ are linearly independent,
i.e., we assume that $\D h_x$ has rank $p$ for all $x \in \Real^n$.
Define the following distributions associated with control
system~\eqref{eq:system}
\begin{equation}
\begin{aligned}
\Gg_i \coloneqq &\Sp\{ad^j_fg : 0 \leq j \leq i\}.
\end{aligned}
\label{eq:G}
\end{equation}
To the output~\eqref{eq:output} we associate the nonsingular,
involutive, $(n-p)$-dimensional distribution
\begin{equation}
\label{eq:W}
\W \coloneqq \ann{\left(\Sp{\left\{\D h_1, \ldots ,
    \D h_p\right\}}\right)}.
\end{equation}
Suppose that we are given a submanifold $\Gamma^\star \subset \Real^n$
of dimension $0 < n^\star < n$ which is either invariant under the
vector field $f(x)$ in~\eqref{eq:system} or controlled invariant,
i.e., it can be made invariant by appropriate choice of smooth
feedback.

\begin{definition}
\label{def:cis}
A closed connected submanifold $N \subset \Real^n$ is called
controlled invariant for~\eqref{eq:system} if there exists a smooth
feedback $\overline{u}: N \rightarrow \Real$ making $N$ an invariant
set for the closed-loop system.%  Following~\cite{Won85}, we denote the
% class of closed, connected, embedded submanifolds of $\Real^n$ which
% are controlled invariant for~\eqref{eq:system} by $\mathscr{I}(f, g,
% \Real^n)$. If $N \in \mathscr{I}(f, g, \Real^n)$, we write
% $\mathscr{F}(f, g, N)$ for the collection of maps that render $N$
% controlled invariant.
\end{definition}

In this paper, as in~\cite{NieMag08}, we treat the controlled
invariant set $\Gamma^\star$ as given data. Often, however, one is
given a set $\Gamma \subset \Real^n$, perhaps defined by virtual
constraints or design goals, and then one must pare away pieces of
$\Gamma$ until all that remains is the maximal controlled invariant
submanifold $\Gamma^\star$ contained in $\Gamma$. We now state the
problem considered in this paper.

\begin{problem*}{Local Transverse Feedback Linearization with Partial
    Information (LTFLPI) Problem }
  Given a smooth single-input system~\eqref{eq:system} with smooth
  output~\eqref{eq:output}, a closed, connected, embedded,
  $n^\star$-dimensional controlled invariant submanifold $\Gamma^\star
  \subset \Real^n$ and a point $x_0 \in \Gamma^\star$, find, if
  possible, a diffeomorphism $\Xi \in \Diff{U}$
\begin{equation}
\begin{aligned}
  \Xi : U &\to \Xi(U) \subset (\Gamma^\star \cap U) \times \Real^{n -
    n^\star}\\
  x &\mapsto (\eta, \xi)
\end{aligned}
\end{equation}
where $U$ is a \nbhd of $x_0$, such that
\begin{itemize}
\item[(i)] The restriction of $\Xi$ to $\Gamma^\star \cap U$ is
\[
\left.\Xi\right|_{\Gamma^\star \cap U} : x \mapsto (\eta, 0). 
\]
\item[(ii)] The dynamics of system~\eqref{eq:system} in $(\eta,
  \xi)$-coordinates reads
\begin{equation}
\begin{aligned}
\dot{\eta} &= f_0(\eta, \xi)\\
\dot{\xi} &= A\xi + b(a_1(\eta, \xi) + a_2(\eta, \xi)u),
\end{aligned}
\label{eq:partial_info}
\end{equation}
where the pair $(A,b)$ is in Brunovsk\'{y} normal form (one chain of
integrators) and $a_2(\eta, \xi) \neq 0$ in $\Xi(U)$.
\item[(iii)] The first component of $\xi$, denoted $\xi_1$ is
  observable, i.e., there exists a function $\tilde{\lambda} : h(U)
  \subseteq \Real^p \to \Real$ such that
\[
\xi_1(x) = \tilde{\lambda} \circ h(x).
\] 
\end{itemize}
\end{problem*}
As illustrated in Section~\ref{sec:motivation}, solving LTFLPI is
relevant for stabilizing the set $\Gamma^\star$ using output feedback.
To understand this claim, suppose that LFTLPI is solvable at $x_0 \in
\Gamma^\star$. Let $\lambda(x) \coloneqq \tilde{\lambda} \circ
h(x)$. Using this function we partially define the diffeomorphism
$\Xi(x)$ by letting $\xi \coloneqq \col{(\lambda(x), L_f\lambda(x), }$
$\ldots, L_f^{n - n^\star - 1}\lambda(x))$. Choose $n - n^\star$
additional independent functions $\eta_i \coloneqq \phi_i(x)$, $i \in
\{1, \ldots, n - n^\star\}$, to complete the coordinate transformation
$\Xi: U \rightarrow \Real^{n^\star} \times \Real^{n - n^\star}$, $x
\mapsto (\eta, \xi)$. In the single-input case, since
$\ann{\left(\Sp{\set{g}}\right)}$ is spanned by exact differentials,
the functions $\phi_i(x)$ can always be chosen (see~\cite{Isi95}) so
that their time derivative along the control system do not depend on
$u$, i.e., so that for all $x \in U$ and all $i \in \{1, \ldots,
n^\star\}$, $L_g\phi_i(x) = 0$. After applying this coordinate
transformation, in $(\eta, \xi)$-coordinates the system is modeled by
equation~\eqref{eq:partial_info}.

If the entire state $x$ is available for feedback, as in the full
information case, then the regular feedback transformation $u =
-\frac{a_1(\eta, \xi)}{a_2(\eta, \xi)} + \frac{v}{a_2(\eta, \xi)}$
yields a system of the form
\begin{equation}
\begin{aligned}
\dot{\eta} &= f_0(\eta, \xi)\\
\dot{\xi} &= A\xi + bv
\end{aligned}
\label{eq:full_info}
\end{equation}
and we say that system~\eqref{eq:system} has been locally
transversely feedback linearized with respect to the set
$\Gamma^\star$. In this case, stabilizing the subspace $\xi = 0$ in
$(\eta, \xi)$-coordinates corresponds to stabilizing the set
$\Gamma^\star \cap U$ in original coordinates (if the trajectories of
the closed-loop system are bounded). For this reason we call the
$\xi$-subsystem of~\eqref{eq:partial_info} the transverse dynamics
of~\eqref{eq:system} with respect to $\Gamma^\star$. Stabilizing
$\xi=0$ can be achieved easily using the auxiliary control input $v$
since the pair $(A,b)$ is controllable. On the target set, the system
dynamics are governed by the ordinary differential equation
\begin{equation}
\dot{\eta} = f_0(\eta, 0).
\label{eq:tang}
\end{equation}
For this reason the dynamics~\eqref{eq:tang} are called the tangential
dynamics of~\eqref{eq:system} with respect to $\Gamma^\star$.

In the partial information case the state $x$ is not available for
feedback, the only available information is given by the output
function~\eqref{eq:output}. In this case $(\eta, \xi)$ is not
available for feedback, the feedback transformation above cannot be
implemented, and it may be impossible to stabilize the $\xi$
subsystem. In the partial information case the $\xi$-subsystem before
feedback transformation is
\begin{equation}
\label{eq:xi_subsys}
\begin{aligned}
\dot \xi_1 &= \xi_2\\
&\cdots\\
\dot \xi_{n-n^\star -1} &= \xi_{n-n^\star}\\
\dot \xi_{n-n^\star} &= a_1(\eta, \xi) + a_2(\eta, \xi)u.
\end{aligned}
\end{equation}
Since $\xi_1 = \lambda(x) = \tilde{\lambda}(h(x))$, it is available
for feedback. For system~\eqref{eq:partial_info} with $\xi_1$ measured
and $a_2(\eta, \xi)$ sign-definite, the results
in~\cite{AtaKha99},~\cite{MarPraIsi07},~\cite{MarPraIsi10}~\cite{DelliPriscoli2009},~\cite{TeePra95}
assert the existence of a dynamic feedback
\[
\begin{aligned}
&\dot \zeta  = \varphi(\zeta, \xi_1)\\
&u = \varrho(\zeta, \xi_1)
\end{aligned}
\]
capable of stabilizing the origin of~\eqref{eq:xi_subsys}. The
selection of an appropriate output feedback design framework depends
crucially on the properties of tangential system~\eqref{eq:tang}. In
the simplest case, the results of~\cite{AtaKha99} can be used whenever
$\phi(\eta, \xi, u) \coloneqq a_1(\eta, \xi) + a_2(\eta, \xi)u$ and
$f_0(\eta, \xi)$ are locally Lipschitz with $\phi(0,0,0)=0$,
$f_0(0,0)=0$ and the tangential dynamics~\eqref{eq:tang} are minimum
phase. Alternatively, if practical stability is sought then the
results of~\cite[Section 6]{TeePra95} can be used, again provided the
tangential subsystem is minimum phase.

In the cases when the tangential dynamics do not necessarily converge
to zero but remain otherwise bounded, the results
in~\cite{MarPraIsi07} are relevant. In~\cite{MarPraIsi07} a
weak-minimum phase assumption is made on the tangential subsystem and
the function $a_1(\eta, \xi)$ is not necessarily known. Furthermore,
they require that the tangential dynamics $f_0(\eta, \xi)$ have the
form $f_0(\eta, \xi_1)$. Sufficient conditions for this additional
property to hold are given in
Corollary~\ref{prop:global_normal_form}. Finally, in cases where the
tangential system has the form $f_0(\eta, \xi_1)$ and $\DER{a_2(\eta,
  \xi)}{\xi_i} \equiv 0$, $i \in \set{2, \ldots, n-n^\star}$ and
$\DER{a_2(\eta, \xi)}{\eta} \equiv 0$, the results in~\cite{ByrIsi91},
see also~\cite{AndPra09}, are applicable.

Once a system is expressed in the normal form~\eqref{eq:partial_info}
there are many other output stabilization techniques one can
consider. The survey~\cite{AndPra09} gives an excellent overview of
the available techniques while also classifying them as direct or
indirect approaches. Conceptually, the direct design approach is
preferable because the state feedback stabilizing control law is known
and therefore the estimation scheme can focus on estimating the
control signal directly. On the other hand, the indirect approach is
far more common in the research literature and in particular, the
approach used in Section~\ref{sec:motivation} is an example of
``domination via a dominant model''~\cite{AndPra09}. Motivated by
these observations, we seek conditions guaranteeing the existence of
an observable transverse output function.
\section{Main result}
\label{sec:main}

The next result, an obvious consequence of~\cite[Theorem
4.1]{NieMag06} or~\cite[Theorem 3.1]{NieMag08}, shows that LFTLPI is
solvable if and only if there exists a ``virtual output'' function
yielding a well-defined relative degree.
%
%
%
%%%%%%%%%%%
\begin{theorem} LTFLPI is solvable at $x_0\in\Gamma^\star$ if and only
  if there exists a smooth $\Real$-valued function $\tilde{\lambda}$,
  defined on a \nbhd of $h(x_0)$ in $\Real^p$ satisfying
\begin{itemize}
\item[(a)] for some \nbhd $U$ of $x_0 \in \Real^n$, $\Gamma^\star \cap
  U \subseteq \{x\in U: \tilde{\lambda} \circ h (x)=0\}$, and
\item[(b)] the system
\begin{equation}
\begin{aligned}
&\dot x=f(x) + g(x) u  \\
&y^\prime = \lambda(x) = \tilde{\lambda} \circ h (x)
\end{aligned}
\label{eq:virtual_output}
\end{equation}
has relative degree $n - n^\star$ at $x_0$.
\end{itemize}
Moreover, if LTFLPI is solvable, then there exists a \nbhd $V
\subseteq U$ of $x_0$ such that, on $V$, a connected component
$\mathcal{Z}^\star$ of the zero dynamics manifold
of~\eqref{eq:virtual_output} coincides with $\Gamma^\star$ :
$\mathcal{Z}^\star \cap V = \Gamma^\star \cap V$.
\label{thm:rel_iff}
\end{theorem}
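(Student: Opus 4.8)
The plan is to reduce the statement to the full-information results of~\cite{NieMag06, NieMag08} by carrying the observability constraint through their construction, and to verify the ``moreover'' clause directly. The key observation is that properties (i)--(iii) defining solvability of LTFLPI correspond term-by-term to the data in conditions (a)--(b), the single extra ingredient being that the transverse output factor through $h$.

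First the forward implication. Assuming LTFLPI is solvable, let $\Xi$ produce the normal form (ii); by (iii) there is $\tilde{\lambda}$ with $\xi_1(x) = \tilde{\lambda}\circ h(x)$, and I set $\lambda = \tilde{\lambda}\circ h$. Reading off the integrator chain~\eqref{eq:xi_subsys}, I would show inductively that $\xi_k = L_f^{k-1}\lambda$ and $L_g L_f^{k-1}\lambda = 0$ for $1 \le k \le n-n^\star-1$, since each equation $\dot\xi_k=\xi_{k+1}$ carries no $u$; the last equation then gives $a_2 = L_g L_f^{n-n^\star-1}\lambda \ne 0$. This is precisely relative degree $n-n^\star$ at $x_0$, establishing (b), while property (i) forces $\xi_1 = \lambda = 0$ on $\Gamma^\star\cap U$, which is (a).

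For the converse, suppose $\tilde{\lambda}$ satisfies (a) and (b) and put $\lambda = \tilde{\lambda}\circ h$. Relative degree $n-n^\star$ guarantees that $\D\lambda, \D L_f\lambda, \ldots, \D L_f^{n-n^\star-1}\lambda$ are independent at $x_0$, so $\xi_k := L_f^{k-1}\lambda$ serve as transverse coordinates. As recalled in Section~\ref{sec:problem}, in the single-input case $\ann{\left(\Sp{\set{g}}\right)}$ is spanned by exact differentials, so the chart can be completed by $n^\star$ functions $\eta_i=\phi_i$ with $L_g\phi_i=0$; in these coordinates the system takes the form~\eqref{eq:partial_info} with $a_2 = L_g L_f^{n-n^\star-1}\lambda \ne 0$, and $\xi_1=\lambda$ is observable by construction, giving (ii) and (iii). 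Property (i) is then furnished by the ``moreover'' claim, proved next.

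The ``moreover'' clause is where I expect the main difficulty, since it is the point at which controlled invariance must be reconciled with the relative-degree vanishing. Let $\bar{u}$ be a feedback rendering $\Gamma^\star$ invariant, so that $f+g\bar{u}$ is tangent to $\Gamma^\star$. I would prove by induction that $L_f^k\lambda\equiv 0$ on $\Gamma^\star$ for $0\le k\le n-n^\star-1$: the base case is (a), and since $L_f^k\lambda$ vanishes on $\Gamma^\star$ its derivative along the tangent field $f+g\bar{u}$ vanishes there as well, i.e. $L_f^{k+1}\lambda + \bar{u}\,L_g L_f^k\lambda = 0$ on $\Gamma^\star$, where the relative-degree condition kills the second term for $k<n-n^\star-1$ and advances the induction. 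Hence $\Gamma^\star$ lies in the zero dynamics manifold $\mathcal{Z}^\star = \set{x : \lambda = L_f\lambda = \cdots = L_f^{n-n^\star-1}\lambda = 0} = \set{\xi=0}$ near $x_0$. By independence of the differentials, $0$ is a regular value of the defining map, so $\mathcal{Z}^\star$ is an embedded $n^\star$-dimensional submanifold; since $\Gamma^\star$ is a connected embedded submanifold of the same dimension contained in $\mathcal{Z}^\star$, the two coincide on a \nbhd $V$ of $x_0$, yielding $\mathcal{Z}^\star\cap V = \Gamma^\star\cap V$ and, in particular, property (i).
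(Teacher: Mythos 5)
Your proposal is correct: reading the relative degree off the integrator chain for necessity, rebuilding the normal form from the relative degree (completing the chart with functions satisfying $L_g\phi_i=0$) for sufficiency, and the induction $L_f^{k+1}\lambda + \bar{u}\,L_gL_f^k\lambda = 0$ along an invariance-rendering feedback, plus the equal-dimension embedded-submanifold argument, for the ``moreover'' clause are all sound. The paper itself omits this proof, deferring to the full-information result~\cite[Theorem 4.1]{NieMag06}, and your argument is essentially that standard proof reconstructed with the observability constraint $\lambda = \tilde{\lambda}\circ h$ carried through, so it takes the same approach as the one the paper intends.
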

%%%%%%%%%%%%%

The proof of Theorem~\ref{thm:rel_iff} is omitted because it is almost
identical to the proof of~\cite[Theorem 4.1]{NieMag06}.

\begin{definition}
  Let $\tilde{\lambda}$ be a smooth $\Real$-valued functions
  satisfying the conditions of Theorem~\ref{thm:rel_iff}. The map
  $\lambda(x) \coloneqq \tilde{\lambda} \circ h(x)$ is called a local
  observable transverse output of~\eqref{eq:system},~\eqref{eq:output}
  with respect to $\Gamma^\star$.
\end{definition}

% Theorem~\ref{thm:rel_iff} shows that LTFLPI is equivalent to a zero
% dynamics assignment problem : given a controlled invariant manifold
% (the target set), find an observable output function yielding a
% well-defined relative degree whose associated zero dynamics manifold
% locally coincides with the target set. % If one can find the output
% function in question then, as discussed in Section~\ref{sec:problem},
% it may be possible to stabilize $\Gamma^\star$ using output feedback.
The main result of this paper, presented next, gives necessary and
sufficient conditions for the existence of an observable transverse
output.

\begin{theorem}
\label{thm:main}
Suppose that $\inv{\left(\Gg_{n-n^\star -2}+ \W\right)}$ is regular at
$x_0 \in \Gamma^\star$. Then LTFLPI is solvable at $x_0$ for
system~\eqref{eq:system} if and only if
\begin{itemize}
\item[(a)] $T_{x_0}\gstar \oplus \Gg_{n-n^\star -1}(x_0) = T_{x_0}\Real^n$
\item[(b)] there exists an open \nbhd $U$ of $x_0$ in $\Real^n$ such
  that, $\left(\forall x \in \gstar \cap U\right)$,
\[
\dim{\left(T_x\gstar \oplus \Gg_{n-n^\star -2}(x)\right)} =
\dim{\left(T_x\gstar \oplus \inv{\left(\Gg_{n-n^\star -2}+
    \W\right)}(x)\right)}.
\]
\end{itemize}
\end{theorem}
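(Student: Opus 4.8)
The plan is to run everything through the characterization of Theorem~\ref{thm:rel_iff}: LTFLPI is solvable at $x_0$ exactly when there is an observable $\lambda=\tilde\lambda\circ h$, vanishing on $\gstar$, whose relative degree at $x_0$ is $r\coloneqq n-n^\star$. Write $\Delta\coloneqq\inv{(\Gg_{r-2}+\W)}$. My first move is to convert each demand on $\lambda$ into a statement about the differential $\D\lambda$. Using the standard identity $L_gL_f^k\lambda=(-1)^kL_{ad^k_f g}\lambda$ (valid once the lower Lie derivatives vanish), I get that relative degree $r$ at $x_0$ is equivalent to $\D\lambda\in\ann{(\Gg_{r-2})}$ near $x_0$ together with $\langle\D\lambda(x_0),ad^{r-1}_f g(x_0)\rangle\neq0$; observability is equivalent to $\D\lambda\in\ann{(\W)}$ near $x_0$ (a function with $\D\lambda\in\Sp\{\D h_1,\dots,\D h_p\}$ factors through the submersion $h$). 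Hence $\lambda$ is observable with the correct vanishing part of the relative degree iff $\D\lambda\in\ann{(\Gg_{r-2})}\cap\ann{(\W)}=\ann{(\Gg_{r-2}+\W)}$, and since $\D\lambda$ is exact, $L_{[X,Y]}\lambda=L_XL_Y\lambda-L_YL_X\lambda$ upgrades this to $\D\lambda\in\ann{(\Delta)}$. This dictionary, plus the fact (Isidori) that under relative degree $r$ the $r\times r$ matrix with entries $\langle\D L_f^i\lambda(x_0),ad^j_f g(x_0)\rangle$ is anti-triangular with nonzero anti-diagonal, hence invertible, drives both directions.

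For necessity, suppose such a $\lambda$ exists; by Theorem~\ref{thm:rel_iff} its zero dynamics manifold agrees with $\gstar$ near $x_0$, so $\ann{(T_{x_0}\gstar)}=\Sp\{\D L_f^k\lambda(x_0):0\le k\le r-1\}$. Taking $v=\sum_j\alpha_j\,ad^j_f g(x_0)\in T_{x_0}\gstar\cap\Gg_{r-1}(x_0)$ and pairing with each $\D L_f^k\lambda(x_0)$ gives $M\alpha=0$ with $M$ the invertible matrix above, so $v=0$; since $\dim T_{x_0}\gstar+\dim\Gg_{r-1}(x_0)=n^\star+r=n$, condition (a) follows. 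By (a) the generators $ad^0_f g,\dots,ad^{r-1}_f g$ are independent at $x_0$, hence near $x_0$, so $\Gg_{r-2}$ is regular of rank $r-1$ there; then on $\gstar$ near $x_0$ the bundles $T_x\gstar$ and $\Gg_{r-2}(x)$ have complementary dimensions $(n-r)+(r-1)=n-1$ and, by lower semicontinuity of $\dim\bigl(T_x\gstar+\Gg_{r-2}(x)\bigr)$ together with the trivial bound $\le n-1$, trivial intersection, so $\dim\bigl(T_x\gstar+\Gg_{r-2}(x)\bigr)=n-1$. Finally $\D\lambda(x)$ is a nonzero covector annihilating both $\Delta(x)$ and $T_x\gstar$, so $\dim\bigl(T_x\gstar+\Delta(x)\bigr)\le n-1$; the reverse inequality is automatic from $\Gg_{r-2}\subseteq\Delta$ (Proposition~\ref{prop:ann12}), giving (b).

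For sufficiency, assume (a), (b), and the regularity of $\Delta$ at $x_0$. By Frobenius there are coordinates in which $\Delta$ is the kernel of the submersion $q\coloneqq(z_1,\dots,z_s)$, $s=n-\dim\Delta(x_0)\ge1$, and the functions with $\D(\cdot)\in\ann{(\Delta)}$ are precisely those of the form $\mu\circ q$. The central geometric step is to show $q(\gstar)$ is an embedded hypersurface near $q(x_0)$: the rank of $q|_{\gstar}$ is $\dim T_x\gstar-\dim\bigl(T_x\gstar\cap\Delta(x)\bigr)$, and rewriting $T_x\gstar+\Delta(x)=T_x\gstar+\Gg_{r-2}(x)$ by (b) (dimension $n-1$ by the count above) makes this rank the constant $s-1$; the constant-rank theorem then presents $q(\gstar)$ locally as a regular level set $\{\rho=0\}$. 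Setting $\lambda\coloneqq\rho\circ q$ yields $\D\lambda\in\ann{(\Delta)}\subseteq\ann{(\W)}$, so $\lambda$ is observable and vanishes on $\gstar$. It remains to certify relative degree exactly $r$: since $q$ is a submersion, $\D\lambda(x_0)\neq0$ and it spans the one-dimensional $\ann{\bigl(\Delta(x_0)+T_{x_0}\gstar\bigr)}$, so $\langle\D\lambda(x_0),ad^{r-1}_f g(x_0)\rangle\neq0$ is equivalent to $ad^{r-1}_f g(x_0)\notin\Delta(x_0)+T_{x_0}\gstar$. By (b) at $x_0$ this space is $\Gg_{r-2}(x_0)+T_{x_0}\gstar$, and (a) excludes $ad^{r-1}_f g(x_0)$ from it (a decomposition $ad^{r-1}_f g(x_0)=w+t$ with $w\in\Gg_{r-2}(x_0)$, $t\in T_{x_0}\gstar$ would force $t\in T_{x_0}\gstar\cap\Gg_{r-1}(x_0)=\{0\}$, whence $ad^{r-1}_f g(x_0)\in\Gg_{r-2}(x_0)$, contradicting $\dim\Gg_{r-1}(x_0)=r$). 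This completes the reduction.

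The hard part will be the sufficiency construction, specifically verifying that $q|_{\gstar}$ has \emph{constant} rank so that $q(\gstar)$ is a genuine embedded hypersurface, and then recognizing that although the direction of $\D\lambda(x_0)$ is forced up to scale by $q(\gstar)$, it is nonetheless automatically transverse to $ad^{r-1}_f g(x_0)$ precisely because (a) and the $x_0$-instance of (b) combine to put $ad^{r-1}_f g(x_0)$ outside $\Delta(x_0)+T_{x_0}\gstar$. The supporting translation lemmas — the anti-triangular Lie-derivative identities and the exact-form/involutive-closure equivalence $\ann{(\Gg_{r-2}+\W)}\cap\{\text{exact}\}=\ann{(\Delta)}\cap\{\text{exact}\}$ — are routine but must be stated carefully, since they are what let condition (b), phrased through $\inv{(\Gg_{r-2}+\W)}$, control an a priori analytic requirement on $\lambda$.
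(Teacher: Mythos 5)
Your proof is correct, but it departs from the paper's in both halves, most substantially in sufficiency. For necessity of (a), the paper exploits coordinate and feedback invariance: it passes to the normal form~\eqref{eq:partial_info} and reads off $T_pV+\Gg_{n-n^\star-1}(\col{(p,0)})=T_p\Real^n$ from controllability of $(A,b)$, getting directness from the single-input bound $\dim{(\Gg_{n-n^\star-1})}\le n-n^\star$; you instead invoke the zero-dynamics coincidence in Theorem~\ref{thm:rel_iff} and invertibility of the anti-triangular matrix $\left[L_{ad^j_fg}L_f^i\lambda(x_0)\right]$ — both arguments are standard and sound. Necessity of (b) is essentially the same sandwich in both proofs, $n-1=\dim{\left(T_x\gstar\oplus\Gg_{n-n^\star-2}(x)\right)}\le\dim{\left(T_x\gstar+\inv{\left(\Gg_{n-n^\star-2}+\W\right)}(x)\right)}\le n-1$, driven by $\D\lambda\in\ann{\left(T\gstar\right)}\cap\ann{\left(\inv{\left(\Gg_{n-n^\star-2}+\W\right)}\right)}$; your justification that the left-hand dimension equals $n-1$ on a whole neighborhood (lower semicontinuity plus the count $n^\star+(n-n^\star-1)$) is cleaner than the paper's implicit appeal to the normal form holding along $\gstar\cap U$. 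The genuine divergence is sufficiency: the paper splits $\inv{\left(\Gg_{n-n^\star-2}+\W\right)}=\Gg^\tang\oplus\Gg_{n-n^\star-2}$ via a lemma imported from~\cite{NieMag08}, composes flows of the adapted frame~\eqref{eq:array_info}, takes $\lambda=s^\tran_{n-n^\star-1}$, and explicitly declines to verify that this $\lambda$ has relative degree $n-n^\star$, citing~\cite{NieMag08}; you instead apply Frobenius to $\Delta=\inv{\left(\Gg_{n-n^\star-2}+\W\right)}$, show via the constant-rank theorem (rank $s-1$, computed from (a), (b) and the regularity hypothesis) that the image of $\gstar$ in the local leaf space is a hypersurface $\{\rho=0\}$, and pull back $\lambda=\rho\circ q$, then certify the relative degree directly from $\ker{\D\lambda(x_0)}=T_{x_0}\gstar+\Delta(x_0)=T_{x_0}\gstar+\Gg_{n-n^\star-2}(x_0)$ together with the observation that (a) keeps $ad^{n-n^\star-1}_fg(x_0)$ outside this hyperplane. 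Your route is more self-contained — it proves the very claim the paper outsources — and it identifies the transverse output intrinsically as a defining function of the projected target set; what the paper's flow construction buys in exchange is its semi-constructive character (it is exactly the map the example section inverts to obtain $\lambda(x)=x_5\e^{-x_4}$) and reuse of the machinery already developed for the full-information problem.
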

\begin{proof}
  Suppose that LTFLPI is solvable at $x_0 \in \Gamma^\star$. Condition
  (a) is coordinate and feedback invariant so it suffices to show that
  it holds for system~\eqref{eq:full_info}. Let $V \coloneqq
  \Xi{(\Gamma^\star \cap U)}$. By the properties of the normal
  form~\eqref{eq:partial_info}, $\Xi(x_0) = \col{(p_0, 0)}$. Hence
  in $(\eta,\xi)$-coordinates

\[
T_{p_0}V + \Gg_{{n} - n^\star - 1}(\col{(p_0, 0)})= \image{\left(\begin{bmatrix}
I_{n^\star} & \star & \star & \ldots & \star \\ 0_{n-n^\star\times
n^\star}& b & Ab & \ldots & A^{n-n^\star-1}b
\end{bmatrix}\right).}
\]
Since $(A,b)$ is a controllable pair it immediately follows that $T_pV
+ \Gg_{{n} - n^\star - 1}(\col{(p, 0)}) = T_p\Real^n$.
% Since,
%   LTFLPI solvable implies that the full information problem is
%   solvable at $x_0$, by~\cite[Theorem 3.2]{NieMag08}, condition (a),
%   we have
% %
% %
% \[
% T_{x_0}\Gamma^\star + \Gg_{{n} - n^\star - 1}(x_0)=T_{x_0}\Real^n.
% \]
% %
% %
Furthermore, since~\eqref{eq:system} is a single-input system and by
the definition~\eqref{eq:G} of $\Gg_i$, $\dim{(\Gg_{{n} - n^\star -
    1})} \leq n - n^\star$, and therefore the subspaces
$T_{x_0}\Gamma^\star$ and $\Gg_{{n} - n^\star - 1}(x_0)$ are
independent which proves that condition $(a)$ is necessary.

We are left to show that condition $(b)$ is necessary. Since LTFLPI
is solvable and $\xi_1(x) = \lambda(x) = \tilde{\lambda} \circ h(x)$,
we have
%
%
%\[
$\Gamma^\star \cap U = \{x \in \Real^{{n}}: \xi(x) = 0\} \subseteq \{x
\in \Real^{{n}}: {\lambda}(x) = 0\}$
%\]
%
%
so that, for all $x \in \Gamma^\star \cap U$ and for any $v \in
T_{x}\Gamma^\star$,
%
%
%\[
$L_v{\lambda}(x) = 0$.
% \]
%
%
This implies that $\D\lambda \in \ann{(T\Gamma^\star)}$. Furthermore,
since ${\lambda}(x)$ yields a well-defined relative degree of
$n-n^\star$ at $x_0$, for any $x$ in an open \nbhd of $x_0$, without
loss of generality $U$,
%
%
%\[
$L_g{\lambda}(x) = L_{ad_fg}{\lambda}(x) = \cdots =
L_{ad^{n-n^\star-2}_fg}{\lambda}(x) = 0$
%\]
%
%
and
%
%
%\[
$L_{ad^{n-n^\star - 1}_fg}{\lambda}(x) \neq 0$.
% \]
%
%
This means that, in a \nbhd of $x_0$, without loss of generality
$U$,
\[
\D{\lambda} \in \ann{\left(\Gg_{{n} - n^\star - 2}\right)}, \; \; \;
      \D{\lambda} \not\in \ann{\left(\Gg_{{n} - n^\star - 1}\right)}.
\]
By the chain rule, $\D\lambda_x = \D\tilde{\lambda}_{h(x)} \circ \D
h_x,$
%
%
%\[
%\D\lambda_x = \D\left(\tilde{\lambda} \circ h\right)_x=
%\D\tilde{\lambda}_{h(x)} \circ \D h_x,
%\]
%
%
so that, for any vector field $w \in \W$, and all $x \in U$,
\[
\D\lambda_x(w(x)) = \D\tilde{\lambda}_{h(x)} \circ \D h_x(w(x)) = 0.
\]
In other words, $\D\lambda \in \ann{(\W)} = \Sp\{\D h_1, \ldots, \D
h_p\}$.  This shows that, in $U$,
\[
\begin{aligned}
&\D\lambda \in \ann{\left(\Gg_{{n} - n^\star - 2}\right)} \cap \ann{\left(\W\right)}\\
&\Rightarrow \; \Gg_{{n} - n^\star - 2}+ \W \subseteq \ann{(\D\lambda)}.
\end{aligned}
\]
The distribution $\ann{(\D\lambda)}$ is involutive since its
annihilator is spanned by smooth, exact one-forms. Therefore
%
%
%\[
$\inv{(\Gg_{{n} - n^\star - 2}+ \W)} \subseteq \ann{(\D\lambda)}$
%\]
%
%
and by Proposition~\ref{prop:ann12},
\[
\begin{aligned}
&\D\lambda \in \ann{\left(\inv{(\Gg_{{n} - n^\star - 2}+ \W)}\right)}.
\end{aligned}
\]
This shows that, on $\Gamma^\star \cap U$, $d\lambda \in
\ann{(T\Gamma^\star)} \cap \ann{\left(\inv{(\Gg_{n - n^\star -
2}+\W)}\right)}$. Thus, by Proposition~\ref{prop:annint},
\[
\begin{aligned}
\D\lambda &\in \ann{\left(T\Gamma^\star + 
\inv{(\Gg_{n - n^\star - 2}+ \W)}\right)}
\end{aligned}
\]
which implies that on $\Gamma^\star \cap U$,
\begin{equation}
\dim{\left(\ann{\left(T\Gamma^\star + \inv{\Gg_{n - n^\star - 2}+
W}\right)}\right)} \geq 1.
\label{eq:dim_ann}
\end{equation}
Therefore, by~\eqref{eq:dim_ann} and Proposition~\ref{prop:annann}, at
any point on $x \in \Gamma^\star \cap U$
\[
\dim{\left(T_{x}\Gamma^\star + \inv{(\Gg_{n - n^\star - 2}+
\W)}(x)\right)} < {n}.
\]
We have already shown that condition $(a)$ is necessary and therefore
on $\Gamma^\star \cap U$,
\begin{equation}
\dim{\left( T_x\Gamma^\star \oplus \Gg_{n -n^\star - 2}(x)\right)} = {n} -1.
\label{eq:dim_TVG}
\end{equation}
Therefore, by~\eqref{eq:dim_ann} and~\eqref{eq:dim_TVG}, we have that
for any point in $\Gamma^\star \cap U$
\begin{equation*}
\begin{aligned}
{n} -1 &= \dim{\left( T_{x}\Gamma^\star + \Gg_{n-n^\star -
    2}\right)(x)}\\ &\leq \dim{\left(T_{x}\Gamma^\star +
  \Gg_{n-n^\star - 2}(x) + \W(x)\right)}\\ &\leq \dim{\left(
  T_{x}\Gamma^\star + \inv{\left(\Gg_{n -n^\star - 2}+ \W\right)}(x)
  \right)} < n.
\end{aligned}
\end{equation*}
which proves the necessity of condition $(b)$.

We now turn to the proof of sufficiency. Conditions $(a)$ and $(b)$,
and the regularity of $\inv{(\Gg_{n-n^\star-2} + \W)}$ at $x_0$ imply
that $T\Gamma^\star \cap \inv{\left(\Gg_{n-n^\star-2}+\W\right)}$ is a
smooth nonsingular distribution near $x_0$. Using an argument
identical to that in the proof of~\cite[Lemma 4.5]{NieMag08}, it can
be shown that, by taking $U$ sufficiently small, there exists a smooth
nonsingular distribution $\Gg^\tang \subset
\inv{(\Gg_{n-n^\star-2}+\W)}$ on $U$ enjoying the two properties
\[
\begin{aligned}
  \left(\forall x \in U\right) \;
  &\inv{\left(\Gg_{n-n^\star-2}+W\right)}(x) = \Gg^\tang(x) \oplus
  \Gg_{n -
    n^\star-2}(x),\\
  \left(\forall x \in \Gamma^\star \cap U\right) \;
  &\left.\Gg^\tang\right|_{\Gamma^\star \cap U}(x) = T_x\Gamma^\star
  \cap \inv{\left(\Gg_{n-n^\star-2}+\W\right)}(x).
\end{aligned}
\]
Let $V \coloneqq \Gamma^\star \cap U$ and let $w_1, \ldots, w_\mu$ be
a set of local generators for $\Gg^\tang$ on $U$. Similarly, there
exist $n^\star - \mu$ vector fields $\{v_1, \ldots, v_{n^\star -
  \mu}\}$, $v_i : V \To TV$, such that, for all $x \in V$,
\[
T_xV = \left.\Gg^\tang\right|_V(x) \oplus \Sp\{v_1, \ldots, v_{n^\star - \mu}\}(x).
\]
Now we have a collection of $n$ linearly independent vector fields,
\begin{equation}
\underbrace{\underbrace{\overbrace{v_1, \ldots, v_{n^\star - \mu}}^{TV
    / \left.\Gg^\tang\right|_V}; \overbrace{w_1, \ldots, w_{\mu}}^{\Gg^\tang}}_{TV}; \overbrace{g, \ldots,
ad^{n-n^\star-2}_fg}^{\Gg_{n - n^\star -2}},
    ad_f^{n-n^\star-1}g}_{T\Real^n}
\label{eq:array_info}
\end{equation}
which we use to generate a coordinate transformation analogous to that
used in Theorem~\cite[Theorem 3.2]{NieMag08}. We work our way from
left to right in the list~\eqref{eq:array_info} starting with the
group of vector fields spanning $TV/\left.\Gg^\tang\right|_V$. Define
the map $S_{\varnothing} \coloneqq \left(s_1, \ldots, s_{n^\star -
  \mu}\right) \mapsto \Phi^\varnothing_{S_\varnothing}(x_0)$ as
\[
\Phi^{\varnothing}_{S_\varnothing}(x_0) = \phi^{v_{n^\star -
  \mu}}_{s_{n^\star - \mu}} \circ \cdots \circ \phi^{v_1}_{s_1}(x_0).
\]
Next define $S^\parallel \coloneqq \left(s^\tang_{1}, \ldots,
  s^\tang_{\mu}\right) \mapsto \Phi^{\tang}_{S^\tang}(x)$, as
\[
\Phi^\tang_{S^\tang}(x) \coloneqq \phi^{w_{\mu}}_{s^\tang_{\mu}} \circ
  \cdots \circ \phi^{w_1}_{s^\tang_{1}}(x),
\]
and the map $S^\tran \coloneqq \left(s^\tran_{0}, \ldots,
s^\tran_{n-n^\star - 2}\right) \mapsto \Phi^\tran_{S^\tran}(x)$, as
%% it is the composition of flows generated by vector fields that span
%% $\Gg^\tang$.  Define the map $S^\tran \coloneqq \left(s^\tran_{0}, \ldots,
%% s^\tran_{n-n^\star - 2}\right) \mapsto \Phi^\tran_{S^\tran}(x)$ by
%
%
\[
\Phi^\tran_{S^\tran}(x) \coloneqq \phi^{g}_{s^\tran_{0}} \circ \cdots
  \circ \phi^{ad^{n-n^\star -2}_{f}g}_{s^\tran_{n - n^\star -2}}(x).
\]
Finally, let $s \coloneqq (S_\varnothing, s^\tran_{n - n^\star -1},
S^\tran, S^\tang) \mapsto \Phi_s(x_0)$, with domain a \nbhd $U$ of
$s=0$, be defined as
%% It consists of flows generated by vector fields that span $G_{n -n
%% ^\star-2}$. Finally, let $W \subset \Real^{n}$ be a \nbhd of the
%% origin, sufficiently small, to ensure that the map $s \mapsto
%% \Phi_s(x_0)$, $W \rightarrow \Phi_W(x_0)$, defined as
%
%
\begin{equation}
\begin{aligned}
\Phi_s(x_0) \coloneqq \Phi^{\tang}_{S^\tang} \circ \Phi^{\tran}_{S^\tran}
\circ \phi^{ad^{n-n^\star -1}_{f}g}_{s^\tran_{ n - n^\star - 1}}\circ
\Phi^{\varnothing}_{S_{\varnothing}}(x_0).
%% F^s(x_0)&: B \rightarrow F(B)\\ &:s = (S_\varnothing, s^\tran_{n - n^\star -
%% 1}, S^\tran_1, S^\tang_1) \mapsto F^{S^\tang_1}_\parallel \circ
%% F^{S^\tran_1}_\pitchfork \circ \phi^{ad^{n-n^\star -1}_{f}g}_{s^\tran_{
%% n - n^\star - 1}}\circ F^{S_{\varnothing}}_{\varnothing}(x_0).
\end{aligned}
\label{eq:flows}
\end{equation}
Since the vector fields in the list~\eqref{eq:array_info} are linearly
independent near $x_0$, it follows from the inverse function theorem
that there exists a \nbhd $U$ of $s=0$ such that~\eqref{eq:flows} is a
diffeomorphism onto its image. Let
\begin{equation}
\lambda(x) = s^\tran_{n - n^\star - 1}(x).
\label{eq:exoutput}
\end{equation}
We will not show that~\eqref{eq:exoutput} yields a well-defined
relative degree of ${n}-\ns$ at $x_0$ because the arguments are
similar to the proof~\cite[Theorem 3.2]{NieMag08}. Instead, we focus
on showing that there exists a function $\tilde{\lambda}$ such that
$\lambda = \tilde{\lambda}(h(x))$. The function~\eqref{eq:exoutput}
yields relative degree $n -n ^\star$ on near $x_0$, so in particular
\[
L_g{\lambda}(x) = L_{ad_fg}{\lambda}(x) = \cdots =
L_{ad^{n-n^\star-2}_fg}{\lambda}(x) = 0.
\]
Furthermore, since $\Gamma^\star \subset \lambda^{-1}(0)$, these facts imply that,
\[
\begin{aligned}
\D\lambda \in \ann{(T\Gamma^\star)} \cap \ann{({\Gg}_{n - n^\star
-2})} &= \ann{(T\Gamma^\star + {\Gg}_{n - n^\star -2})}\\ &=
\ann{(T\Gamma^\star + \inv{(\Gg_{{n} - n^\star - 2} + W)})}
\\&\subseteq \ann{(T\Gamma^\star + \Gg_{{n} - n^\star - 2} + W)}\\
&= \ann{(T\Gamma^\star)}\cap \ann{(\Gg_{{n} - n^\star - 2} + W)}.
\end{aligned}
\]
Therefore,
\[
\begin{aligned}
\D\lambda &\in \ann{(\Gg_{{n} - n^\star - 2})} \cap \ann{(W)}\\&=
\ann{(\Gg_{{n} - n^\star - 2})} \cap \Sp\{\D h_1, \ldots,
\D h_p\}
\end{aligned}
\]
so
\[
\D\lambda = \sum^{p}_{i=1}\sigma_i(x)\D h_i(x)
\]
which implies that $\lambda = \tilde{\lambda}(h(x))$ and $\sigma_i(x)
= \left.\DER{\tilde{\lambda}}{y_i}\right|_{y = h(x)}$.
\end{proof}
\begin{remark}
  When the state $x$ is available for feedback, i.e., $h(x) = x$, $\W
  = \set{0}$ and $\Gamma^\star = \set{x_0}$ is an equilibrium point of
  the open-loop system, the conditions of Theorem~\ref{thm:main}
  coincide with the necessary and sufficient conditions for solving
  the state-space exact feedback linearization problem~\cite[Theorem
  4.2.3]{Isi95}.
\end{remark}
\begin{remark}
  The direct generalization of Theorem~\ref{thm:main} to multi-input,
  multi-output systems gives sufficient, not necessary, conditions
  under which the MIMO version of LTFLPI is solvable. The MIMO proof
  of necessity is an open problem and the subject of future research.
\end{remark}
\subsection*{Example}
We now return to the motivating example of
Section~\ref{sec:motivation} to illustrate the application of
Theorem~\ref{thm:main}. In Section~\ref{sec:motivation} we had a
system of the form~\eqref{eq:system},~\eqref{eq:output} with $f(x) =
\col{\left(x_4, -x_3-x_2^3, x_2, 0, x_1\right)}$, $g(x) =
\col{\left(x_1, 0, 0, 1, x_5\right)}$, $h(x) = \col{\left(x_4,
    x_5\right)}$. The target set is given by $\Gamma^\star = \set{x
  \in \Real^4 : x_1 = x_4 = x_5 = 0}$.  We now use
Theorem~\ref{thm:main} to justify the discussion from
Section~\ref{sec:motivation} and show that LTFLPI is solvable, for the
given system and set, in a \nbhd of the origin. First note that
$n^\star = 1$ and that
%\[
$(\forall x \in \Gamma^\star) \; \; T_x\Gamma^\star = \Sp{\set{v_1,
    v_2}}(x) = \Sp\{e_2, e_3\}$
% % %
% % %
% % \[
% % \begin{aligned}
% %   \W &= \ann{\left(\Sp\{\D H_1, \D H_2, \D H_3\}\right)} = \Sp\{e_4, e_5\}
% % \end{aligned}
% % \]
% % %
% % %
where $e_2$ and $e_3$ are the second and third natural basis vectors
for $\Real^5$.
% \Sp{\set{\left[
%   \begin{array}{c}
%  0\\1\\0\\0\\0
%    \end{array}
%  \right], \left[
%   \begin{array}{c}
%  0\\0\\1\\0\\0
%    \end{array}
%  \right]}}.
% \]
Furthermore
\[
\Gg_{2}(x) = \Sp{\set{g, ad_fg, ad^2_fg}}(x) = \Sp{\set{
\left[
   \begin{array}{c}
 x_1\\0\\0\\1\\x_5    
   \end{array}
 \right], \; 
\left[
   \begin{array}{c}
 x_4 - 1\\0\\0\\0\\0    
   \end{array}
 \right], \;
 \left[
 \begin{array}{c}
 0\\0\\0\\0\\1 - x_4    
   \end{array}
 \right]}}.
\]
Checking conditions of Theorem~\ref{thm:main} we have that
$T_0\Gamma^\star + \Gg_{2}(0) = T_0\Real^5 \simeq \Real^5$
% \[
% T_0\Gamma^\star + \Gg_{2}(0) = \image{\left[\begin{array}{cccrc}0 & 0 & 0 & -1
%        & 0\\1 & 0 & 0 & 0 & 0\\0 & 1 & 0 & 0 & 0\\0 & 0 & 1 & 0 & 0\\0
%        & 0 & 0 & 0 &
%        1\end{array}\right]} = T_0\Real^5 \simeq \Real^5
% \]
so condition (a) holds at $x_0 = 0$. In order to check condition (b)
of Theorem~\ref{thm:main} we first write $\W = \ann{\left(\Sp{\left\{\D h_1, \D h_2\right\}}\right)} =
\Sp{\set{w_1, w_2, w_3}}(x) = \Sp\{e_2, e_3, e_1\}$.
% \[
% \W = \ann{\left(\Sp{\left\{\D h_1, \D h_2\right\}}\right)} =
% \Sp{\set{w_1, w_2, w_3}}(x) = \Sp{\set{
%  \left[
%    \begin{array}{c}
%  0\\1\\0\\0\\0
%    \end{array}
%  \right], \;
%  \left[
%    \begin{array}{c} 
%  0\\0\\1\\0\\0  
%    \end{array}
%  \right], \;
% \left[
%    \begin{array}{c}
%  1\\0\\0\\0\\0    
%    \end{array}
%  \right] }}.
% \]
The distribution $\Gg_1 + \W$ is regular in a \nbhd of the origin, it
has dimension four is everywhere. Calculating the Lie brackets between
the vector fields $g$, $ad_fg$, $w_1$, $w_2$ shows that, for all $x
\in \Real^5$, $\Gg_1 + \W(x) = \inv{\left(\Gg_1 + \W\right)}(x)$. This
makes verifying that condition (b) of Theorem~\ref{thm:main} holds
easy to check. We conclude that there exists an observable function
such that Theorem~\ref{thm:rel_iff} holds and hence that LTFLPI is
solvable.

In order to actually find the observable output, in this simple case,
one can follow the semi-constructive procedure of
Theorem~\ref{thm:main}. Construct the maps from the proof of
Theorem~\ref{thm:main} noting that, in a \nbhd of $x_0$,
$T\Gamma^\star/ \Gg^\tang = \{0\}$. We have
\[
\Phi^{\tang}_{S^\tang}(x_0) = \phi^{w_2}_{s^\tang_2} \circ
\phi^{w_1}_{s^\tang_1}(x_0) = \col{\left(x_0, s^\tang_1
    +x_0, s^\tang_2 + x_0, x_0, x_0 \right)}
\]
and $\Phi^{\tran}_{S_\tran}(x) = \phi^g_{s^\tran_0} \circ
\phi^{ad_fg}_{s^\tran_1}(x)$. Hence the overall map~\eqref{eq:flows}
is given by $\Phi_s(x_0) =\Phi^\tang_{S^\tang} \circ
\Phi^{\tran}_{S_\tran} \circ \phi^{ad^2_fg}_{s^\tran_2}(x_0)$ where
\[
\begin{aligned}
s^\tran_0 &\mapsto \phi^{g}_{s^\tran_0}(x) =
\col{\left(\e^{s^\tran_0}x_1, x_2, x_3, s^\tran_0 + x_4, \e^{s^\tran_0}x_5\right)}\\
s^\tran_1 &\mapsto \phi^{ad_fg}_{s^\tran_1}(x) = \col{\left((x_4
    -1)s^\tran_{1} + x_1, x_2, x_3, x_4, x_5\right)}\\
s^\tran_2 &\mapsto \phi^{ad^2_fg}_{s^\tran_2}(x) = \col{\left(x_1,
    x_2, x_3, x_4, (1 - x_4)s^\tran_2 + x_5\right)}.
\end{aligned}
\]
and therefore, with $x_0 = 0$,
\[
\Phi_s(x_0) =\Phi^\tang_{S^\tang} \circ \Phi^{\tran}_{S_\tran}
\circ \phi^{ad^2_fg}_{s^\tran_2}(x_0) =
\col{\left(-s_1^\tran\e^{s^\tran_0}, s^\tang_1, s^\tang_2, s^\tran_0, s^\tran_2\e^{s^\tran_0}\right)}.
\]
Inverting this map yields
\[
\col{\left(s^\tang_1(x), s^\tang_2(x),
    s_0^\tran(x), s^\tran_1(x), s^\tran_2(x)\right)} =
\col{\left(x_2,x_3, x_4, -x_1\e^{-x_4}, x_5\e^{-x_4}\right)}.
\]
% \[
% \left[\begin{array}{c}s^\tang_1(x)\\s^\tang_2(x)\\s_0^\tran(x)\\s^\tran_1(x)\\s^\tran_2(x)\end{array}\right]
% = 
% \left[\begin{array}{c}x_2\\x_3\\x_4\\-x_1\e^{-x_4}\\x_5\e^{-x_4}\end{array}\right].
% \]
The observable transverse output is the function $\lambda(x) =
s^\tran_2(x) = x_5\e^{-x_4}$. This function allows one to follow the
design procedure in Section~\ref{sec:motivation}.
%\end{example}

\section{Extensions and applications}
\label{sec:extensions}
In this section we present extensions and applications of
Theorem~\ref{thm:main}. In Section~\ref{sec:global} we pose the global
transverse feedback linearization problem with partial information in
which, roughly speaking, one seeks a single coordinate transformation
such that~\eqref{eq:system} is equivalent to the normal
form~\eqref{eq:partial_info} in a \nbhd of the entire set
$\Gamma^\star$. We present sufficient conditions for the global
problem to be solvable by restricting the geometry of the target
set. In Section~\ref{sec:disturbances} we illustrate how these results
can be applied to systems affected by disturbances that cannot be
measured.

\subsection{Global transverse feedback linearization with partial information}
\label{sec:global}

The results of Section~\ref{sec:main} are local, valid in a \nbhd of a
point on the target manifold $\Gamma^\star$. In this section we seek a
global solution. By global we mean a solution valid in a \nbhd of
$\Gamma^\star$, not necessarily all of $\Real^n$. The following is a
global version of LTFLPI.

\begin{problem*}{Global Transverse Feedback Linearization with Partial
    Information (GTFLPI) Problem }
  Given a smooth single-input system~\eqref{eq:system} with smooth
  output~\eqref{eq:output}, a closed, connected, embedded,
  $n^\star$-dimensional controlled invariant submanifold $\Gamma^\star
  \subset \Real^n$, find, if possible, a diffeomorphism $\Xi \in
  \Diff{\mathcal{N}}$
\begin{equation}
\begin{aligned}
  \Xi  : \mathcal{N} &\to \Xi(\mathcal{N}) \subseteq \Gamma^\star \times
  \Real^{n -
    n^\star}\\
   x &\mapsto (\eta, \xi)
\end{aligned}
\end{equation}
where $\mathcal{N}$ is a \nbhd of $\Gamma^\star$, such that
\begin{itemize}
\item[(i)] The restriction of $\Xi$ to $\Gamma^\star$ is
\[
\left.\Xi\right|_{\Gamma^\star} : x \mapsto (\eta, 0). 
\]
\item[(ii)] The dynamics of system~\eqref{eq:system} in $(\eta,
  \xi)$-coordinates is given by~\eqref{eq:partial_info} where the pair
  $(A,b)$ is in Brunovsk\'{y} normal form (one chain of integrators)
  and $a_2(\eta, \xi) \neq 0$ in $\Xi(\mathcal{N})$.
\item[(iii)] The first component of $\xi$, denoted $\xi_1$ is
  observable, i.e., there exists a function $\tilde{\lambda} :
  h(\mathcal{N}) \subseteq \Real^p \to \Real$ such that
\[
\xi_1(x) = \tilde{\lambda} \circ h(x).
\] 
\end{itemize}
\end{problem*}
In order to solve GTFLPI we restrict the class of allowable target
sets $\Gamma^\star$.
\begin{assumption}
  The set $\Gamma^\star$ is diffeomorphic to a generalized cylinder,
  i.e., $\Gamma^\star \simeq \mathbb{T}^k \times \Real^{n^\star - k}$, $k
  \in \{0, \ldots, n^\star\}$ where $\mathbb{T}^k$ is the $k$-torus.
\label{ass:cylinder}
\end{assumption}

Assumption~\ref{ass:cylinder} restricts the class of set considered in
the global problem. However, there are many applications, most notably
path following, where the set to be stabilized is a generalized
cylinder~\cite{ConMagNieTos10}. This type of set arises when solving a
path following problem for closed curves.

%% The following result, taken from~\cite{Arn89}, is needed in our
%% discussion.

%% \begin{theorem}[\cite{Arn89}, Lemma 49.2]
%% Let $\Gamma^\star$ be a connected $n^\star$-dimensional
%% manifold. Then, $\Gamma^\star$ is diffeomorphic to a generalized
%% cylinder $\mathbb{T}^k \times \Real^{n^\star - k}$, $k \in \{0, \ldots
%% n^\star\}$ if and only if there exist $n^\star$ complete, commutative
%% and linearly independent vector fields on $\Gamma^\star$.
%% \label{thm:arnold}
%% \end{theorem}

\begin{proposition}
\label{prop:global_normal_form}
Under Assumption~\ref{ass:cylinder}, GTFLPI is solvable if
\begin{itemize}
\item[(a)] $(\forall x \in \Gamma^\star)$ $T_{x}\Gamma \oplus \Gg_{n-n^\star-1}(x) = T_{x}\Real^{{n}}$
\item[(b)] The distribution $\Gg_{n - n^\star -2}$ is non-singular and
  involutive in a \nbhd of $\Gamma^\star$
\item[(c)] The distribution $\inv{\left(\Gg_{{n}-n^\star -2}+
  \W\right)}$ is non-singular and, $(\forall x \in \Gamma^\star)$, 
\[
\dim{\left(T_x\gstar \oplus \Gg_{{n}-n^\star -2}(x)\right)} =
\dim{\left(T_x\gstar \oplus \inv{\left(\Gg_{{n}-n^\star -2}+
        \W\right)}(x)\right)}.
\]
\end{itemize}
% then there exists a diffeomorphism
% \[
% \begin{aligned}
% \Xi : &U \To \Xi(U) \subset \Gamma \times \Real^{\bar{n} - n^\star}\\
% & q \mapsto (\eta, \xi)
% \end{aligned}
% \]
% where $U$ is an open \nbhd of $\Gamma^\star$, such that in $(\eta,
% \xi)$-coordinates~\eqref{eq:system} is modelled
% by~\eqref{eq:partial_info} with $(A, b)$ a controllable pair in
% Brunovsk\'{y} normal form. Moreover, the set $\Gamma^\star$ in $(\eta,
% \xi)$-coordinates is given by
% \[
% \Xi(\Gamma^\star) = \set{(\eta, \xi) \in \Xi(U) : \xi = 0}.
% \]
% Finally, the first component of $\xi$, $\xi_1$ is measurable.
\end{proposition}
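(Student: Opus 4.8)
The strategy is to globalize the local construction of Theorem~\ref{thm:main} by exploiting the product structure of $\Gamma^\star$ afforded by Assumption~\ref{ass:cylinder}, which rules out the topological obstructions that ordinarily prevent a collection of local normal-form charts from being stitched together into a single one. First I would verify that hypotheses (a)--(c) imply, pointwise on all of $\Gamma^\star$, the hypotheses of Theorem~\ref{thm:main}, so that at each $x \in \Gamma^\star$ the local problem LTFLPI is solvable. Condition (a) is verbatim condition (a) of the local theorem; the regularity of $\inv{(\Gg_{n-n^\star-2}+\W)}$ is assumed in (c); and the dimension equality in (c) is exactly condition (b) of Theorem~\ref{thm:main}. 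The additional hypothesis (b) of the proposition---that $\Gg_{n-n^\star-2}$ is nonsingular and involutive near $\Gamma^\star$---is what upgrades the construction from local to global: involutivity of $\Gg_{n-n^\star-2}$ means the relative degree condition can be satisfied by a single global function, and it is precisely the extra structure needed to obtain the stronger normal form $f_0(\eta,\xi_1)$ alluded to in Section~\ref{sec:problem}.

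Next I would construct the global transverse output. Following the proof of Theorem~\ref{thm:main}, at each point the candidate $\lambda$ annihilates $\Gg_{n-n^\star-2}$ and $\W$ and vanishes on $\Gamma^\star$, so $\D\lambda$ is a section of $\ann{(T\Gamma^\star + \inv{(\Gg_{n-n^\star-2}+\W)})}$. Under hypotheses (a) and (c) this annihilator is a line bundle over $\Gamma^\star$. The key point is that on a generalized cylinder $\mathbb{T}^k \times \Real^{n^\star-k}$ every line subbundle of the trivial cotangent bundle is trivial and every closed one-form is exact, so a global primitive $\tilde{\lambda}\circ h$ exists; this is where Assumption~\ref{ass:cylinder} is used decisively. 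Having fixed $\lambda = \tilde{\lambda}\circ h$, I would set $\xi \coloneqq \col{(\lambda, L_f\lambda, \ldots, L_f^{n-n^\star-1}\lambda)}$ and complete the chart with global fibre coordinates $\eta$ transverse to the $\xi$-flows, as in~\eqref{eq:flows}, now valid on a full tubular \nbhd $\mathcal{N}$ of $\Gamma^\star$ rather than on a single $U$. Conditions (i)--(iii) of GTFLPI then follow: (i) from $\Gamma^\star \subseteq \lambda^{-1}(0)$, (ii) from the relative-degree-$(n-n^\star)$ structure together with the nonvanishing of $L_{ad_f^{n-n^\star-1}g}\lambda$ guaranteeing $a_2 \neq 0$, and (iii) from the observability $\D\lambda \in \Sp\{\D h_1,\ldots,\D h_p\}$ established exactly as in Theorem~\ref{thm:main}.

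The main obstacle I anticipate is globalization, not the pointwise conditions: passing from the germ-level solvability guaranteed by Theorem~\ref{thm:main} to a single diffeomorphism valid on all of $\mathcal{N}$. Two distinct gluing problems arise. The first is topological---constructing the scalar output $\tilde\lambda$ globally---and is resolved by triviality of line bundles and exactness of closed forms on the cylinder $\mathbb{T}^k \times \Real^{n^\star-k}$, which is why Assumption~\ref{ass:cylinder} cannot be dropped. The second is the construction of a single coordinate chart covering the tube $\mathcal{N}$: I would build the $\eta$-coordinates from a global frame for $\Gg^\tang$ and for $T\Gamma^\star/\Gg^\tang$, whose existence again rests on the parallelizability of the cylinder and the global nonsingularity assumed in (b) and (c). The delicate step is ensuring the transverse flows $\phi^{ad_f^j g}_{s_j}$ used to leave $\Gamma^\star$ are defined uniformly on a \nbhd of the entire set and assemble into a diffeomorphism onto $\Xi(\mathcal{N})$; here a compactness-or-properness argument on the normal bundle, together with the involutivity hypothesis (b) that keeps the transverse integrator chain well-behaved, yields the required tubular \nbhd. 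Throughout, the verification of relative degree and observability is carried over verbatim from the proof of Theorem~\ref{thm:main} and need not be repeated.
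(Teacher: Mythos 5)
Your pointwise verification that (a) and (c) supply the hypotheses of Theorem~\ref{thm:main} at every $x \in \Gamma^\star$ is fine, but the globalization step --- which you correctly identify as the crux --- rests on two topological claims that are false precisely in the cases Assumption~\ref{ass:cylinder} is meant to cover. On $\Gamma^\star \simeq \mathbb{T}^k \times \Real^{n^\star - k}$ with $k \geq 1$ it is \emph{not} true that every closed one-form is exact: $H^1_{\mathrm{dR}}(\mathbb{T}^k \times \Real^{n^\star-k}) \cong \Real^k \neq 0$, the angular form $\D\theta$ on $S^1$ being the standard counterexample. Nor is it true that every real line subbundle of a trivial bundle over such a set is trivial: $H^1(\mathbb{T}^k;\mathbb{Z}/2) \neq 0$, and the M\"obius bundle embeds in the trivial rank-two bundle over $S^1$. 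Since the applications motivating the global problem (path following for closed curves) have $k \geq 1$, your argument collapses exactly where it is needed: producing a single function $\lambda$ whose differential spans the required line field is a genuine monodromy/exactness problem in the torus directions, not a consequence of contractibility-style facts. (Triviality of the line bundle itself could be rescued from hypotheses (a), (b) and parallelizability of the cylinder --- the annihilator of a globally framed corank-one subbundle of a globally framed bundle is trivial by a determinant argument --- but the existence of an \emph{exact} nonvanishing section cannot be obtained this way.)

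The paper's route is different and sidesteps this difficulty: Assumption~\ref{ass:cylinder} together with (a) and (b) are exactly the hypotheses of the full-information global result \cite[Theorem 4.4]{NieMag06}, which is invoked as a black box to produce the global diffeomorphism with properties (i)--(ii) and, in particular, a function $\lambda : \mathcal{N} \to \Real$ vanishing on $\Gamma^\star$ with uniform relative degree $n - n^\star$ over $\Gamma^\star$. The only new content of the proof is property (iii): hypothesis (c), through the same annihilator chain as in Theorem~\ref{thm:main}, namely $\ann{(T\Gamma^\star + \Gg_{n-n^\star-2})} = \ann{(T\Gamma^\star + \inv{(\Gg_{n-n^\star-2}+\W)})} \subseteq \ann{(\Gg_{n-n^\star-2})} \cap \ann{(\W)}$, shows that this already-constructed $\lambda$ satisfies $\D\lambda \in \Sp\{\D h_1, \ldots, \D h_p\}$ and is therefore observable. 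Note also that you misattribute the role of hypothesis (b): involutivity of $\Gg_{n-n^\star-2}$ is one of the conditions of the cited full-information theorem, not what yields the stronger tangential form $f_0(\eta,\xi_1)$; that refinement is Corollary~\ref{cor:simple_tang} and requires the additional commutation hypothesis (d).
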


\begin{proof}
  The proof of this result is based on the proof of~\cite[Theorem
  4.4]{NieMag06}. Assumption~\ref{ass:cylinder} and hypotheses (a) and
  (b) imply that the conditions of~\cite[Theorem 4.4]{NieMag06}
  hold. This guarantees the existence of a diffeomorphism with
  properties (i) and (ii) in the GTFLPI problem. In particular it
  ensures the existence of a function $\lambda : \mathcal{N} \subseteq
  \Real^{{n}} \To \Real$ such that $\Gamma^\star \subset
  \lambda^{-1}(0) = \set{x \in \mathcal{N} : \lambda(x) = 0}$ and
  which yields a uniform relative degree of ${n} - n^\star$ over
  $\Gamma^\star$.

  Now, assume that (c) holds, then using similar arguments to those in
  Theorem~\ref{thm:main} we have
\[
\begin{aligned}
\D\lambda \in \ann{(T\Gamma^\star)} \cap \ann{({\Gg}_{n - n^\star
-2})} &= \ann{(T\Gamma^\star + {\Gg}_{n - n^\star -2})}\\ &=
\ann{(T\Gamma^\star + \inv{\left(\Gg_{{n} - n^\star - 2} + W\right)})}
\\&\subseteq \ann{(T\Gamma^\star + \Gg_{{n} - n^\star - 2} + W)}\\
&= \ann{(T\Gamma^\star)}\cap \ann{(\Gg_{{n} - n^\star - 2} + W)}.
\end{aligned}
\]
Therefore,
\[
\begin{aligned}
\D\lambda &\in \ann{(\Gg_{{n} - n^\star - 2})} \cap \ann{(W)}\\&=
\ann{(\Gg_{{n} - n^\star - 2})} \cap \Sp\{\D h_1, \ldots,
\D h_p\}
\end{aligned}
\]
so
\[
\D\lambda = \sum^{p}_{i=1}\sigma_i(x)\D h_i(x)
\]
which implies that $\lambda = \tilde{\lambda}(h(x))$ with $\sigma_i(x)
= \left.\DER{\tilde{\lambda}}{y_i}\right|_{y = h(x)}$ and therefore
that $\xi_1(x) = \lambda(x)$ is observable.
\end{proof}

The next result is partly motivated by the results
in~\cite{MarPraIsi07} as discussed in
Section~\ref{sec:problem}. Recall that once a system is represented in
the normal form~\eqref{eq:partial_info}, the approach one uses to
stabilize $\Gamma^\star$ using output feedback depends on the dynamics
of the tangential subsystem~\eqref{eq:tang}. When the tangential
dynamics do not necessarily converge to zero but remain bounded, one
may use the results in~\cite{MarPraIsi07}. However,
in~\cite{MarPraIsi07} the $\eta$-dynamics in~\eqref{eq:partial_info}
must only depend on the tangential states $\eta$ and the observable
transversal state $\xi_1$. The next result gives sufficient conditions
for this to be the case.

\begin{corollary}
  If, in addition to the hypothesis of
  Proposition~\ref{prop:global_normal_form},
\begin{itemize}
\item[(d)] For all $i, j \in \set{0, \ldots, n - n^\star - 1}$,
  $\liebr{ad^i_fg}{ad^j_fg} = 0$
\end{itemize}
then there exists a diffeomorphism solving the GTFLPI problem such
that the dynamics of system~\eqref{eq:system} in $(\eta,
\xi)$-coordinates is given by
\begin{equation}
\begin{aligned}
\dot{\eta} &= f_0(\eta, \xi_1)\\
\dot{\xi} &= A\xi + b(a_1(\eta, \xi) + a_2(\eta, \xi)u),
\end{aligned}
\label{eq:partial_info_special}
\end{equation}
with $(A, b)$ a controllable pair in Brunovsk\'{y} normal
form. 
\label{cor:simple_tang}
\end{corollary}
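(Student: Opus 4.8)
The plan is to keep the transversal coordinates and the observable function $\lambda$ furnished by Proposition~\ref{prop:global_normal_form}, and to spend the remaining freedom in the choice of the tangential coordinates $\eta$ to kill the dependence of $f_0$ on $\xi_2, \ldots, \xi_{n-n^\star}$. Write $r \coloneqq n - n^\star$ and recall that Proposition~\ref{prop:global_normal_form} already supplies $\xi_i = L_f^{i-1}\lambda$ together with the normal form~\eqref{eq:partial_info}, in which $u$ is absent from $\dot\eta$ precisely because $L_g\eta_i = 0$. Since replacing the $\eta_i$ by other tangential functions of $x$ affects neither $\lambda$, nor the $\xi$-block, nor the condition $a_2 \neq 0$, nor observability of $\xi_1$, it suffices to produce tangential functions $\eta_1, \ldots, \eta_{n^\star}$ with the stronger property $\D\eta_i \in \ann{(\Gg_{r-1})}$ (rather than merely $\D\eta_i \in \ann{(\Sp\{g\})}$). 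This is exactly where hypothesis (d) enters: it forces the generators $g, ad^1_fg, \ldots, ad^{r-1}_fg$ of $\Gg_{r-1}$ to commute pairwise, so $\Gg_{r-1}$ is involutive; being nonsingular of dimension $r$ in a neighbourhood of $\Gamma^\star$ by (a), Frobenius yields such $\eta_i$, which automatically satisfy $L_g\eta_i = 0$.

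Next I would convert ``$f_0$ independent of $\xi_j$'' into a bracket condition. With $L_g\eta_i = 0$ one has $\dot\eta_i = L_f\eta_i =: f_{0,i}$. Let $Z_j \coloneqq (\Xi^{-1})_\ast(\partial/\partial\xi_j)$ denote the coordinate vector fields pulled back to $\Real^n$. Because the $\D\eta_i$ span $\ann{(\Gg_{r-1})}$, their common kernel is $\Gg_{r-1}$ by Proposition~\ref{prop:annann}, so $\Sp\{Z_1, \ldots, Z_r\} = \Gg_{r-1}$ and $L_{Z_j}\eta_i = \partial\eta_i/\partial\xi_j = 0$. Using Definition~\ref{def:liebracket} one then gets $\partial f_{0,i}/\partial\xi_j = L_{Z_j}L_f\eta_i = L_f L_{Z_j}\eta_i + L_{\liebr{Z_j}{f}}\eta_i = L_{\liebr{Z_j}{f}}\eta_i$. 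Hence it remains only to show $\liebr{Z_j}{f} \in \Gg_{r-1}$ for $j \in \set{2, \ldots, r}$, since then $L_{\liebr{Z_j}{f}}\eta_i = 0$ as $\D\eta_i \in \ann{(\Gg_{r-1})}$.

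To locate $Z_j$ in the filtration I would use the codistribution side. By (b) the distribution $\Gg_{r-2}$ is nonsingular of dimension $r-1$, so $\ann{(\Gg_{r-2})}$ has dimension $n^\star + 1$; it contains the $n^\star$ forms $\D\eta_i$ together with $\D\xi_1 = \D\lambda$, the latter lying in $\ann{(\Gg_{r-2})}$ by the relative-degree conditions $L_{ad^k_fg}\lambda = 0$, $k \leq r-2$, and being independent of the $\D\eta_i$ since $\D\lambda(Z_1) = 1$. Thus $\ann{(\Gg_{r-2})} = \Sp\{\D\eta_1, \ldots, \D\eta_{n^\star}, \D\xi_1\}$, and $Z_j \in \Gg_{r-2}$ iff $\D\xi_1(Z_j) = \delta_{1j} = 0$, i.e. iff $j \geq 2$. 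Finally, for any $Z = \sum_{k=0}^{r-2} c_k\, ad^k_fg \in \Gg_{r-2}$, the identity $ad^{k+1}_fg = \liebr{f}{ad^k_fg}$ gives $\liebr{Z}{f} = -\sum_k (L_f c_k)\, ad^k_fg - \sum_{k=0}^{r-2} c_k\, ad^{k+1}_fg \in \Gg_{r-1}$ straight from~\eqref{eq:G}. Therefore $\liebr{Z_j}{f} \in \Gg_{r-1}$ for every $j \geq 2$, which closes the computation and puts the system in the form~\eqref{eq:partial_info_special}.

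The hard part will be the globalization, not the differential calculation. Frobenius only produces the $\eta_i$ locally, whereas GTFLPI demands them in a neighbourhood of all of $\Gamma^\star$. I would dispose of this exactly as in the proof of Proposition~\ref{prop:global_normal_form} and~\cite[Theorem 4.4]{NieMag06}: Assumption~\ref{ass:cylinder} pins down the topology $\Gamma^\star \simeq \mathbb{T}^k \times \Real^{n^\star - k}$, and the involutive, nonsingular distribution $\Gg_{r-1}$ foliates a tubular neighbourhood of $\Gamma^\star$ by integral manifolds transverse to it; taking global coordinates on $\Gamma^\star$ and transporting them along these leaves yields globally defined $\eta_i$ with $\D\eta_i \in \ann{(\Gg_{r-1})}$ while preserving (i)--(iii). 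Checking that this transported frame remains a diffeomorphism on the entire neighbourhood, and that the leaf structure is compatible with the cylinder decomposition, is the one genuinely delicate point; the bracket computation above is purely pointwise and then applies verbatim.
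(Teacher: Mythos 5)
Your local computation is correct, and it takes a genuinely different route from the paper's proof, which is essentially a citation: the paper takes the globally defined $\lambda$ of uniform relative degree $n-\ns$ from Proposition~\ref{prop:global_normal_form} and then invokes the proof of~\cite[Proposition 9.1.1]{Isi95}, where condition (d) is used to build coordinates out of composed flows of the pairwise commuting vector fields $g, ad_fg, \ldots, ad_f^{n-\ns-1}g$; commutativity makes the coordinate vector fields of the $\xi$-block coincide with these generators, and $\liebr{ad^j_fg}{f} = -ad^{j+1}_fg$ then removes $\xi_2,\ldots,\xi_{n-\ns}$ from $\dot{\eta}$. You instead keep $\xi_i = L_f^{i-1}\lambda$, apply Frobenius to $\Gg_{n-\ns-1}$ to get $\D\eta_i \in \ann{(\Gg_{n-\ns-1})}$, and run the same mechanism through annihilators: the chain $\Sp\{Z_1,\ldots,Z_{n-\ns}\} = \Gg_{n-\ns-1}$, then $Z_j \in \Gg_{n-\ns-2}$ for $j \geq 2$ via $\ann{(\Gg_{n-\ns-2})} = \Sp\{\D\eta_1,\ldots,\D\eta_{\ns},\D\lambda\}$, then $\liebr{Z_j}{f} \in \Gg_{n-\ns-1}$, is all sound (you do implicitly use that the new $(\eta,\xi)$ is again a coordinate system, which follows from the relative degree making $\D\xi_1,\ldots,\D\xi_{n-\ns}$ independent on $\Gg_{n-\ns-1}$). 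A nice byproduct is that your local argument uses (d) only through involutivity of $\Gg_{n-\ns-1}$, a strictly weaker hypothesis; what full commutativity buys in the paper's route is exactly the part you did not carry out.

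That part is the genuine gap: the corollary is a statement about GTFLPI, so the $\eta$-component must be produced on a neighbourhood of all of $\Gamma^\star$, and your globalization is a sketch containing a concrete flaw. Under Assumption~\ref{ass:cylinder}, $\Gamma^\star \simeq \mathbb{T}^k \times \Real^{\ns-k}$ admits no global coordinates when $k \geq 1$, so ``taking global coordinates on $\Gamma^\star$ and transporting them along the leaves'' cannot be done literally; this is precisely why GTFLPI asks for a map into $\Gamma^\star \times \Real^{n-\ns}$ rather than into $\Real^{\ns} \times \Real^{n-\ns}$. What you actually need is a smooth projection $\pi: \mathcal{N} \to \Gamma^\star$ whose fibres are plaques of the foliation generated by $\Gg_{n-\ns-1}$, each meeting $\Gamma^\star$ transversally in exactly one point, uniformly over the (possibly noncompact) set $\Gamma^\star$ --- and that is the delicate point you flag but leave unresolved. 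The clean repair is to build the map in the opposite direction, as in the proof of Theorem~\ref{thm:main} and in~\cite[Proposition 9.1.1]{Isi95}: define $\Phi$ on a subset of $\Gamma^\star \times \Real^{n-\ns}$ by composing the flows of $g, \ldots, ad_f^{n-\ns-1}g$ starting from points of $\Gamma^\star$. Since flows of sections of an involutive distribution preserve that distribution, under (d) the fibres $\Phi(\{\eta\}\times\cdot)$ are integral manifolds of $\Gg_{n-\ns-1}$, so the $\Gamma^\star$-valued component of $\Phi^{-1}$ annihilates $\Gg_{n-\ns-1}$ and your pointwise computation then applies verbatim; the injectivity of $\Phi$ near $\Gamma^\star$ is the tubular-neighbourhood argument of~\cite[Theorem 4.4]{NieMag06} on which Proposition~\ref{prop:global_normal_form} already relies, and the failure of the $ad^i_fg$ to be complete is why the conclusion, as the paper notes, holds only on a neighbourhood of $\Gamma^\star$.
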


\begin{proof}
  By Proposition~\ref{prop:global_normal_form} there exists a
  real-valued function $\lambda : \mathcal{N} \subseteq \Real^{{n}} \To
  \Real$ with $\Gamma^\star \subset \lambda^{-1}(0)$ and yields a
  uniform relative degree of ${n} - n^\star$ over $\Gamma^\star$. The
  remainder of the claim follows from the proof of~\cite[Proposition
  9.1.1]{Isi95} and the assumption that (d) holds. In this case, since
  the vector fields $ad^i_fg$, $i \in \set{1, \ldots, n - n^\star
    - 1}$ are not assumed to be complete, we are only assured to
  obtain a diffeomorphism with the required properties in a \nbhd of
  $\Gamma^\star$.
\end{proof}

\subsection{Systems with unobservable disturbances}
\label{sec:disturbances}

Theorem~\ref{thm:main} finds applications on systems affected by
disturbance signals that cannot be measured. Consider the single-input
smooth control system
\begin{equation}
 \label{eq:SISOexo}
\begin{aligned}
 \dot{x} &= f(x, w) + g(x, w)u\\
 \dot{w} &= s(w),
\end{aligned}
\end{equation}
with $x \in \Real^n$, $w \in \Real^k$ and control input $u \in
\Real$. The state $w$ can be thought of as an unobservable disturbance
generated by the dynamical system modeled by $\dot{w} = s(w)$. The
variable $x$ is assumed measured. Denote $\bar{n} \coloneqq n + k$ and
let $q \coloneqq \col{(x, w)}$. Re-writing~\eqref{eq:SISOexo} as
\begin{equation}
\dot{q} = F(q) + G(q)u
\label{eq:q_model}
\end{equation}
with $F(q) \coloneqq \col{(f(x,w), s(w))}$, $G(q) \coloneqq
\col{(g(x,w), 0)}$ and with output
\[
y = H(q), \qquad H(q) = \left[\begin{array}{cc}I_{n \times n} & 0_{n \times
      k}\end{array}\right]q,
\]
we immediately have the following corollary.
\begin{corollary}
\label{cor:disturbances}
Suppose that $\Gamma^\star \subset \Real^{\bar{n}}$ is a closed,
connected, embedded, $n^\star$-dimensional controlled invariant
submanifold for~\eqref{eq:SISOexo}. Let $q_0 = (x_0, w_0) \in
\Gamma^\star$ and
\[
\W \coloneqq \ann{\left(\Sp{\left\{\D H_1, \ldots , \D H_n\right\}}\right)}
= \Sp{\left\{\DER{}{w_1}, \ldots , \DER{}{w_{k}}\right\}}.
\]
If the hypotheses of Theorem~\ref{thm:main} hold, then there exists a
diffeomorphism
\[
\begin{aligned}
\Xi : &U \To \Xi(U)\\
&q \mapsto (\eta, \xi)
\end{aligned}
\]
where $U$ is a \nbhd of $q_0$ in $\Real^{\bar{n}}$, such that
system~\eqref{eq:SISOexo} is locally diffeomorphic
to~\eqref{eq:partial_info}. Moreover, the set $\Gamma^\star \cap U$ in
$(\eta, \xi)$-coordinates is given by
\[
\Xi(\Gamma^\star \cap U) = \set{(\eta, \xi) \in \Xi(U): \xi = 0}.
\]
Finally, the first component of $\xi$, denoted $\xi_1$ is observable.
\end{corollary}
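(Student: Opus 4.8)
The plan is to recognize that the corollary is a direct specialization of Theorem~\ref{thm:main} to the augmented state $q = \col{(x, w)} \in \Real^{\bar{n}}$, which is why the surrounding text asserts that it follows ``immediately.'' First I would verify that the rewritten system~\eqref{eq:q_model} together with the output $y = H(q)$ is a bona fide instance of the framework~\eqref{eq:system},~\eqref{eq:output}: it is single-input and control-affine, with smooth vector fields $F, G$ and smooth output $H$. The standing assumption that $\D H_q$ has full rank $p = n$ is automatic here because $H(q) = \left[\begin{array}{cc} I_{n \times n} & 0_{n \times k}\end{array}\right] q$ is a linear projection, so $\D H_q = \left[\begin{array}{cc} I & 0\end{array}\right]$ has rank $n$ at every $q$. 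Consequently the distribution associated with the output via~\eqref{eq:W} is exactly $\W = \ann{\left(\Sp{\left\{\D H_1, \ldots, \D H_n\right\}}\right)} = \Sp{\left\{\DER{}{w_1}, \ldots, \DER{}{w_k}\right\}}$, as recorded in the statement, and controlled invariance of $\Gamma^\star$ for~\eqref{eq:SISOexo} is identical to controlled invariance for~\eqref{eq:q_model} since the two equations describe the same dynamics.

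With the framework in place, I would invoke Theorem~\ref{thm:main} for the data $(F, G, H, \Gamma^\star, q_0)$. Since its hypotheses are assumed to hold for this augmented system, the theorem asserts that LTFLPI is solvable at $q_0$. By the definition of the LTFLPI problem this yields, on a \nbhd $U$ of $q_0$, a diffeomorphism $\Xi : U \To \Xi(U)$, $q \mapsto (\eta, \xi)$, satisfying conditions (i)--(iii): in $(\eta, \xi)$-coordinates the system reads~\eqref{eq:partial_info} with $(A, b)$ in Brunovsk\'{y} normal form and $a_2 \neq 0$; the restriction $\left.\Xi\right|_{\Gamma^\star \cap U}$ sends $q \mapsto (\eta, 0)$, so that $\Xi(\Gamma^\star \cap U) = \set{(\eta, \xi) \in \Xi(U) : \xi = 0}$; and $\xi_1$ is observable, meaning $\xi_1(q) = \tilde{\lambda} \circ H(q)$ for some smooth $\tilde{\lambda}$ defined on $H(U) \subseteq \Real^n$.

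Finally I would translate the observability statement into the disturbance-rejection interpretation that is the point of the corollary. Because $H(q) = x$, the equality $\xi_1(q) = \tilde{\lambda} \circ H(q) = \tilde{\lambda}(x)$ shows that the transverse output $\xi_1$ is a function of the measured state $x$ alone and is independent of the unmeasured disturbance $w$; this is precisely what permits the output-feedback design of Section~\ref{sec:motivation} to be carried out without access to $w$. I do not anticipate a genuine obstacle: the entire content is the identification of~\eqref{eq:q_model} as an instance of~\eqref{eq:system} followed by a single application of the main theorem. The only item requiring care is the bookkeeping of dimensions---namely that the transverse subsystem now has dimension $\bar{n} - n^\star$, that $\Gg_i$ and $\W$ are computed from the augmented vector fields $F$, $G$ and the projection output $H$, and that $p = n$---so that the hypotheses and conclusion of Theorem~\ref{thm:main} are applied to the correct objects.
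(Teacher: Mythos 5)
Your proposal is correct and matches the paper's intent exactly: the paper offers no separate proof, stating that the corollary follows ``immediately'' from recognizing~\eqref{eq:q_model} with output $H$ as an instance of~\eqref{eq:system},~\eqref{eq:output} and applying Theorem~\ref{thm:main}, which is precisely the argument you give (including the rank-$n$ check on $\D H_q$, the identification of $\W$, and the translation of LTFLPI conditions (i)--(iii) into the corollary's conclusions). No gaps.
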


As an application of these ideas, consider the path following problem
in presence of disturbances. Path following problems can naturally be
cast as set stabilization problems~\cite{NieMag06} and transverse
feedback linearization has been effectively used to implement path
following controllers~\cite{NieFulMag10}. When the control system is
affected by unobservable disturbances, the results in this paper are
more suitable to solving the path following problem.

%%%%%%%%%%
\subsection*{Example}
  Consider a kinematic unicycle with unit translational velocity
  affected by an unobservable disturbance $w$ described by the model
\begin{equation}
\left[\begin{array}{c}\dot{x}_1 \\ \dot{x}_2 \\
    \dot{x}_3\end{array}\right] =
    \left[\begin{array}{c}\cos{(x_3)}\\ \sin{(x_3)}\\
    0\end{array}\right] +
    \left[\begin{array}{c}0\\0\\1\end{array}\right]u +
    \left[\begin{array}{cc}0 & 0\\ 1 & 0 \\ 0 & 0\end{array}\right]w.
\label{eq:unicycle}
\end{equation}
Suppose that the disturbance $w$ is generated by the exosystem
\begin{equation}
\left[\begin{array}{c}\dot{w}_1\\\dot{w}_2\end{array}\right] =
\left[\begin{array}{r}w_2\\-w_1\end{array}\right].
\label{eq:disturbance}
\end{equation}

We assume that the disturbance satisfies, for all $t \in \Real$,
$\|w(t)\| < 1$. Our objective is to design a control law that makes
the position $(x_1, x_2)$ of the unicycle converge to the unit circle
in the $(x_1, x_2)$-plane. The measured states are $x_1$, $x_2$,
$x_3$.

In the state space of~\eqref{eq:unicycle},~\eqref{eq:disturbance} we
view the goal set as the maximal controlled invariant set contained in
$\{(x,w) \in \Real^3 \times \Real^2: x_1^2 + x_2^2 - 1 = 0\}$ which is
given by
\begin{equation}
\Gamma^\star = \{(x, w) \in \Real^{3} \times \Real^2: x_1^2 + x_2^2 - 1 =
x_1\cos{(x_3)} + x_2\sin{(x_3)} + x_2w_1= 0\}.
\label{eq:gstar_ex}
\end{equation}
Here, $n^\star = \dim{(\gstar)} = 3$. Writing $\Gamma^\star =
\gamma^{-1}(0)$ for the function $\gamma : \Real^3 \times \Real^2 \to
\Real^2$ defined by~\eqref{eq:gstar_ex}, one can verify that, under the
assumption that $\|w\| < 1$, zero is a regular value of $\gamma$. The
set~\eqref{eq:gstar_ex} has two connected components, one
corresponding to clockwise motion along the path, the other
corresponding to counterclockwise motion. Since our results are local,
a choice of $(x_0, w_0) \in \Gamma^\star$ selects one of these two
components.

To apply Corollary~\ref{cor:disturbances}, set $q \coloneqq \col{(x,
  w)}$, and define $F(q) = \col{\left(\cos{(x_3)}\right.}$,
$\left.\sin{(x_3)} + w_1,0,w_2,-w_1\right)$, $G(q) =
\col{\left(0,0,1,0,0\right)}$, $H(q) = \col{\left(x_1, x_2,
    x_3\right)}$. The question we ask is: can transverse feedback
linearization be used to stabilize $\Gamma^\star$ using only $y$ for
feedback?  The answer, of course, is yes. For, one can check that the
observable function $\lambda(q) = x_1^2 + x_2^2 - 1$ yields a
well-defined relative degree of $2 = n - \ns$ at each $q \in
\Gamma^\star$. If we set $\xi_1(q) = \lambda(q)$, $\xi_2(q) =
L_F\lambda(q)$ and let $\eta_1, \eta_2, \eta_3$ be any\footnote{If the
  functions $\eta_1$, $\eta_2$, $\eta_3$ are chosen so that $\D\eta_1,
  \D\eta_2, \D\eta_3 \in \ann{(G(q))}$, then we obtain the normal
  form~\eqref{eq:partial_info}. It is always possible to do this. If,
  on the other hand, $\D\eta_1, \D\eta_2, \D\eta_3 \not \in
  \ann{(G(q))}$, then the control $u$ will appear in the $\eta$
  subsystem.} three additional linearly independent functions, then,
the local diffeomorphism $\Xi(q) = \col{(\eta(q), \xi(q))}$ solves
LTFLPI.

Next we confirm this observation using
Corollary~\ref{cor:disturbances}. In this case, with $q_0 = (x_0,
w_0)$, conditions $(a)$ and $(b)$ of Theorem~\ref{thm:main} become
\begin{itemize}
\item[$(a)$] $\dim{(T_{q_0}\Gamma^\star + \Gg_{1}(q_0))} = 5$

\item[$(b)$] there exists an open \nbhd $U$ of $q_0$ in
$\Real^5$ such that
\[
(\forall \; q \in \gstar\cap U) \; \dim(T_{q}\Gamma^\star +
\Gg_{0}(q)) = \dim(T_{q}\Gamma^\star + \inv{\left({\Gg}_{0} + \W\right)}(q))=
\text{ constant,}
\]
\end{itemize}
where $\W$, in this case, is given by $\W = \ann{\left(\Sp\{\D H_1, \D
    H_2, \D H_3\}\right)} = \Sp\{e_4, e_5\}$
% %
% %
% \[
% \begin{aligned}
%   \W &= \ann{\left(\Sp\{\D H_1, \D H_2, \D H_3\}\right)} = \Sp\{e_4, e_5\}
% \end{aligned}
% \]
% %
% %
where $e_4$ and $e_5$ are the fourth and fifth natural basis vectors
for $\Real^5$. At each $q \in \Gamma^\star$ we have that
\[
T_q\Gamma^\star =
\Sp{\left\{\left[\begin{array}{c}0\\0\\0\\0\\1\end{array}\right],
    \left[\begin{array}{r}x_2^2\\-x_1x_2\\0\\\phi(q)\\0\end{array}\right],
    \left[\begin{array}{r}-x_2\varphi(q)\\x_1\varphi(q)\\\phi(q)\\0\\0\end{array}\right]
  \right\}}
% \Sp{\left\{\left[\begin{array}{c}0\\0\\0\\0\\1\end{array}\right],
%     \left[\begin{array}{r}\frac{x_2^2}{\phi(q)}\\-\frac{x_1x_2}{\phi(q)}\\0\\1\\0\end{array}\right],
%     \left[\begin{array}{r}-x_2\frac{\varphi(q)}{\phi(q)}\\x_1\frac{\varphi(q)}{\phi(q)}\\1\\0\\0\end{array}\right] \right\}}
\]
where $\phi(q) \coloneqq x_1\left(\sin{(x_3)} + w_1\right) -
x_2\cos{(x_3)}$, $\varphi(q) \coloneqq  x_1\sin{(x_3)} - x_2\cos{(x_3)}$
% \[
% \begin{aligned}
% &\phi(q) \coloneqq x_1\left(\sin{(x_3)} + w_1\right) - x_2\cos{(x_3)}\\
% &\varphi(q) \coloneqq  x_1\sin{(x_3)} - x_2\cos{(x_3)}
% \end{aligned}
% \]
are both non-zero on $\Gamma^\star$ if $\|w\| < 1$. Therefore
\[
T_q\Gamma^\star + \Gg_1(q)=
\Sp{\left\{\left[\begin{array}{c}0\\0\\0\\0\\1\end{array}\right],
    \left[\begin{array}{r}x_2^2\\-x_1x_2\\0\\\phi(q)\\0\end{array}\right],
    \left[\begin{array}{r}-x_2\varphi(q)\\x_1\varphi(q)\\\phi(q)\\0\\0\end{array}\right],
    \left[\begin{array}{c}0\\0\\1\\0\\0\end{array}\right],
    \left[\begin{array}{c}-\sin{(x_3)}\\\cos{(x_3)}\\0\\0\\0\end{array}\right]
  \right\}}.
\]
The determinant of the matrix whose columns are the above basis
vectors is given by $\phi(q)\varphi^2(q)$.
% \[
% \phi(q)\varphi^2(q) = \left(x_1\left(\sin{(q_3)} + w_1\right) -
%   x_2\cos{(x_3)}\right)\left(x_1\sin{(x_3)} - x_2\cos{(x_3)}\right)^2.
% % \frac{\varphi(q)}{\phi(q)} = \frac{\left( x_2\cos{(x_3)} -
% %     x_1\sin{(x_3)}\right)^2}{x_1\left(\sin{(q_3)} + w_1\right) -
% %   x_2\cos{(x_3)}}.
% \]
If this determinant is non-zero then condition (a) holds. To this end,
note that on the set~\eqref{eq:gstar_ex}
\[
\begin{aligned}
&x_1\cos{(x_3)} + x_2\sin{(x_3)} + x_2w_1= 0\\
\Rightarrow & x_2\left(\sin{(x_3)} + w_1\right) =
-x_1\cos{(x_3)}\\
\Rightarrow & x^2_2\left(\sin{(x_3)} + w_1\right)^2 =
x_1^2\cos^2{(x_3)}\\
\Rightarrow & x^2_2\left(\sin{(x_3)} + w_1\right)^2 =
(1 - x_2^2)\cos^2{(x_3)} \qquad (\text{since $x_1^2 + x_2^2 - 1 =
  0$})\\
\Rightarrow & x_2 = \pm \frac{\cos{(x_3)}}{\sqrt{\left(\sin{(x_3)} +
      w_1\right)^2 + \cos^2{(x_3)}}}.
\end{aligned}
\]
Similarly, using $x_1^2 = 1 - x_2^2$,
\[
x_1 = \mp \frac{\sin{(x_3)} + w_1}{\sqrt{\left(\sin{(x_3)} +
      w_1\right)^2 + \cos^2{(x_3)}}}.
\]
Therefore, we have that
\[
\phi(q) = \mp \frac{1 + 2w_1\sin{(x_3)} + w_1^2}{\sqrt{\left(\sin{(x_3)} + w_1\right)^2 +
  \cos^2{(x_3)}}}, \qquad \varphi(q) = \mp \frac{1 +
  w_1\sin{(x_3)}}{\sqrt{\left(\sin{(x_3)} + w_1\right)^2 + \cos^2{(x_3)}}}
\]
which, under the assumption that $\|w\| < 1$, are both non-zero.

To check condition (b) we first note that since $\Gg_0$ and $W$ are
constant distributions, $\Gg_0 + \W = \inv{\left(\Gg_0 +
    \W\right)}$. We have that, for any $q \in \Gamma^\star$,
\[
T_q\Gamma^\star + \Gg_0(q)=
\Sp{\left\{\left[\begin{array}{c}0\\0\\0\\0\\1\end{array}\right],
    \left[\begin{array}{r}x_2^2\\-x_1x_2\\0\\\phi(q)\\0\end{array}\right],
    \left[\begin{array}{r}-x_2\varphi(q)\\x_1\varphi(q)\\\phi(q)\\0\\0\end{array}\right],
    \left[\begin{array}{c}0\\0\\1\\0\\0\end{array}\right]\right\}},
\]
and
\[
T_q\Gamma^\star + \inv{(\Gg_0 + \W)}(q)=
\Sp{\left\{\left[\begin{array}{c}0\\0\\0\\0\\1\end{array}\right],
    \left[\begin{array}{r}x_2^2\\-x_1x_2\\0\\\phi(q)\\0\end{array}\right],
    \left[\begin{array}{r}-x_2\varphi(q)\\x_1\varphi(q)\\\phi(q)\\0\\0\end{array}\right],
    \left[\begin{array}{c}0\\0\\1\\0\\0\end{array}\right],
    \left[\begin{array}{c}0\\0\\0\\1\\0\end{array}\right]\right\}}.
\]
Since, as already shown, when $\|w\| < 1$ the functions $\varphi$ and
$\phi$ are non-zero on $\Gamma^\star$ and since $x_1$ and $x_2$ are
not equal to zero simultaneously, condition (b) is satisfied.

In $(\eta, \xi)$-coordinates the unicycle has the normal
form~\eqref{eq:partial_info}. At this point a ``high-gain'' output
feedback controller can be used to make the target set $\Gamma^\star$
attractive and have the unicycle traverse the desired path in the
presence of unobservable disturbances.
% \[
% \begin{aligned}
% &a_1(\eta, \xi) = \left.2 +
%   4q_4\sin{(q_3)} + 2q_4^2 + 2q_2q_5\right|_{q = \Xi^{-1}(\eta, \xi)}\\
% &a_2(\eta, \xi) = \left.2\left(q_2\cos{(q_3)} -
%     q_1\sin{(q_3)}\right)\right|_{q = \Xi^{-1}(\eta, \xi)}.
% \end{aligned}
% \]
%%%%%%%%%%

\section{Conclusions}

In this paper we studied the problem of stabilizing a controlled
invariant embedded submanifold in the state space of autonomous
nonlinear control systems using output feedback. We studied the most
natural approach to solving this problem : given a controlled
invariant manifold (the target set), find an observable output
function yielding a well-defined relative degree whose associated zero
dynamics manifold locally coincides with the target set. We call this
the local transverse feedback linearization problem with partial
information. Necessary and sufficient conditions were presented under
which this problem is solvable. We also presented a global solution to
this problem in the case when the target set is a generalized
cylinder.  Finally we illustrated how this work may find applications
in system affected by unobservable disturbances and to path following
problems.

% \section*{Acknowledgments}
% The author acknowledges many useful discussions with Prof. Manfredi
% Maggiore during the preparation of this paper. The author also
% acknowledges the help of an anonymous referee whose suggestions
% improved the motivating example.

%\appendix
%
%
%%%%%% BIBLIOGRAPHY %%%%%%%%%%%
%\bibliographystyle{elsart-num}
\bibliographystyle{siam}
\bibliography{bibTFLPI}
%%%%%%%%%%%%%%%%%%%%%%%%%%%%%%%

\end{document}